\theoremstyle{definition}
\newtheorem{definition}{Definition}
\newtheorem{remark}{Remark}
\newtheorem*{exmps}{Examples}
\theoremstyle{plain}
\newtheorem{theorem}{Theorem}
\newtheorem{lemma}[definition]{Lemma}
\newtheorem{corollary}{Corollary}
\theoremstyle{remark}
\newcommand{\C}{\mathbb{C}}
\renewcommand{\H}{\mathbb{H}}
\newcommand{\K}{\mathbb{K}}
\newcommand{\N}{\mathbb{N}}
\newcommand{\Q}{\mathbb{Q}}
\newcommand{\R}{\mathbb{R}}
\newcommand{\Z}{\mathbb{Z}}
\newcommand{\Acal}{\mathcal{A}}
\newcommand{\Ccal}{\mathcal{C}}
\newcommand{\Gcal}{\mathcal{G}}
\newcommand{\Hcal}{\mathcal{H}}
\newcommand{\Ical}{\mathcal{I}}
\newcommand{\Ncal}{\mathcal{N}}
\newcommand{\diag}{\operatorname{diag}\,}
\newcommand{\SL}{\operatorname{SL}}
\newcommand{\Sp}{\operatorname{Sp}}
\newcommand{\SU}{\operatorname{SU}}
\newcommand{\SO}{\operatorname{SO}}
\newcommand{\im}{\operatorname{Im}}
\newcommand{\re}{\operatorname{Re}}
\newcommand{\oh}{\mathcal{\scriptstyle{O}}}
\let\leq\leqslant
\let\geq\geqslant
\begin{document}

\begin{center}
\begin{huge}
\begin{spacing}{1.0}
\textbf{The maximal discrete extension of the Hermitian modular group}  
\end{spacing}
\end{huge}

\bigskip
by
\bigskip

\begin{large}
\textbf{Aloys Krieg}, \textbf{Martin Raum} and \textbf{Annalena Wernz\footnote{Aloys Krieg, Annalena Wernz,\\ Lehrstuhl A für Mathematik, RWTH Aachen University, D-52056 Aachen, Germany \\ krieg@rwth-aachen.de, annalena.wernz@rwth-aachen.de \\
\hspace*{-1ex}Martin Raum,\\ Chalmers tekniska högskola och Göteborgs Universitet, Institutionen för Matematiska vetenskaper, SE-412 96, 
Göteborg, Sweden \\
martin@raum-brothers.eu \\
\hspace*{-1ex}Martin Raum was partially supported by Vetenskapsr\aa{}det Grant 2015-04139. \\
\hspace*{-1ex}Annalena Wernz was partially supported by Graduiertenkolleg Experimentelle und konstruktive Algebra at RWTH Aachen University.}}
\end{large}
\vspace{0.5cm}\\
\vspace{1cm}
\end{center}
\noindent\textbf{Abstract.}
Let $\Gamma_n(\oh_\K)$ denote the Hermitian modular group of degree $n$ over an imaginary-quadratic number field $\K$. In this paper we determine its maximal discrete extension in $\SU(n,n;\C)$, which coincides with the normalizer of $\Gamma_n(\oh_\K)$. 
The description involves the $n$-torsion subgroup of the ideal class group of $\K$. This group is defined over a particular number field $\widehat{\K}_n$ and we can describe the ramified primes in it. 
In the case $n=2$ we give an explicit description, which involves generalized Atkin-Lehner involutions. Moreover we find a natural characterization of this group in $\SO(2,4)$.
\medskip

\noindent\textbf{Keywords.} Hermitian modular group, normalizer, maximal discrete extension, Atkin-Lehner involution, orthogonal group
\vspace{2ex}\\
\noindent\textbf{Mathematics Subject Classification.} 11F06, 11F55

\newpage

\section{Introduction}

The Hermitian modular group of degree~$n$ over an imaginary-quadratic number field~$\K$ was introduced by H.~Braun~\cite{B2}. Modular forms associated with the Hermitian modular group are among the prime examples of automorphic forms for classical groups, illustrated by for instance Mok's endoscopic classification~\cite{mok-2015}. In this work, we determine the maximal discrete extension in $\SU(n,n;\C)$ of the Hermitian modular group, which we call the extended Hermitian modular group. We examine in detail the case of~$n = 2$ and its relation to orthogonal modular groups, and investigate the fields of definition of the extended Hermitian modular group.

Discrete extensions of classical lattices in Lie groups have remarkable properties and applications. Already the case of congruence subgroups~$\Gamma_0(N) \subseteq \SL(2;\Z)$ admits interesting discrete extensions. For example, the Fricke groups~$\Gamma_0^+(N) \subseteq \SL(2;\R)$ played a prominent role in Monstrous Moonshine and its elusive genus-$0$ properties~\cite{borcherds-1992}. Fricke groups~$\Gamma_0^+(N)$ are maximally discrete and are the normalizer of~$\Gamma_0(N)$ in~$\SL(2;\R)$. Moreover, they are generated by Atkin-Lehner involutions, whose entries lie in~$\Q(\sqrt{d})$ for suitable~$d \mathop{\mid} N$.

The symplectic group~$\Sp(n;\Z) \subseteq \Sp(n;\R)$ is the most common case of higher rank groups. It is deceptively simple in our context, since~$\Sp(n;\Z)$ is already maximally discrete (cf.~\cite{Ra}). Nevertheless, there is an analogue of Fricke groups for the level-$N$ paramodular modular groups in~$\Sp(2;\R)$. These groups, in analogy with the case of~$\Gamma_0(N) \subseteq \SL(2;\R)$, are generated by paramodular Atkin-Lehner involutions, whose entries lie in~$\Q(\sqrt{d})$ for suitable~$d \mathop{\mid} N$. The resulting paramodular Fricke groups are of great use in Gritsenko's generalization of the Maa\ss\ lift to the paramodular setting~\cite{gritsenko-1995}, which in turn play a major role in modularity conjectures for abelian surfaces.

While the utility of maximal discrete extensions is perfectly illustrated by such applications, the examples given so far might indicate that their structure is comparatively simple. The present study of Hermitian modular groups shows the opposite: The maximal discrete extension of the Hermitian modular group over~$\K$ reflects properties of the class group of~$\K$. It also yields a new invariant of~$\K$. Specifically, the entries of elements of the maximal discrete extension belong to the ring of integers of an interesting number field $\widehat{\K}_n$ depending on $\K$ and $n$, which is closely related to the Hilbert class field of~$\K$. In Theorem~\ref{Theorem4} we show that the field extension $\widehat{\K}_n\supseteq\Q$ is ramified exactly at the primes dividing~$nd_{\K}$.

In order to state our main theorem, we need some notation. The special unitary group~$\SU(n,n; \C)$ and the Hermitian modular group~$\Gamma_n(\oh_\K)$ are defined in Section~\ref{Preliminaries}. Given a matrix~$L \in \oh_\K^{2n \times 2n}$, we let~$\Ical(L)$ be the ideal generated by the entries of $L$. Moreover, we let $\Ccal_r$ be a cyclic group of order $r$, and
\begin{gather*}
  \Ccal\ell_\K = \{[\Acal];\;\Acal\text{ fractional ideal in }\K\}
\quad\text{and}\quad
  \Ccal\ell_\K[n] := \{[\Acal]\in\Ccal\ell_\K;\;[\Acal]^n = [\oh_\K]\}
\end{gather*}
stand for the ideal class group of $\K$ with class number~$h_\K$ and its $n$-torsion subgroup.

\begin{theorem}\label{Theorem1_neu} 
Let $\K$ be an imaginary-quadratic number field with ring of integers~$\oh_\K$.
\begin{enumerate}
\item[a)]	
The following is a subgroup of\/ $\SU(n,n;\C)$ containing $\Gamma_n(\oh_\K)$:
\begin{gather*}
  \Delta_{n,\K}^\ast
:=
  \left\{
  \tfrac{1}{u} L\in \SU(n,n;\C);\;
  L\in\oh_\K^{2n\times 2n},\,
  0 \ne u\in\C,\,u^n\in\oh_\K,\,
  u^n\oh_\K = \Ical(L)^n
  \right\}
\text{.}
\end{gather*}

\item[b)]
The map
\begin{gather*}
  \Delta_{n,\K}^* \to \Ccal\ell_\K[n],\quad M=\tfrac{1}{u} L\mapsto [\Ical(L)],
\end{gather*}
is a surjective homomophism of the groups. If $d_\K \neq -3,-4$ its kernel is equal to
\begin{gather*}
  \{\varepsilon M;\;\varepsilon\in\C,\, \varepsilon^{2n} = 1,\, M\in \Gamma_n(\oh_\K)\}
\text{.}
\end{gather*}

\item[c)]
The group~$\Delta_{n,\K}^*$ is the maximal discrete extension of\/ $\Gamma_n(\oh_\K)$ in $\SU(n,n;\C)$ and coincides with the normalizer of\/ $\Gamma_n(\oh_\K)$ in $\SU(n,n;\C)$. The factor group~$\Delta_{n,\K}^* / \Gamma_n (\oh_\K)$ is isomorphic to
\begin{gather*}
 \Ccal_n \times \Ccal\ell_\K [n]
\text{,}\qquad
  \text{if }d_\K \neq -3,-4
\text{.}
\end{gather*}
\end{enumerate}
\end{theorem}
We call $\Delta_{n,\K}^\ast$ in Theorem~\ref{Theorem1_neu} the \emph{extended Hermitian modular group} of degree $n$. The proof of Theorem~\ref{Theorem1_neu} is inspired by ideas presented in~\cite{EGM} and will be given in Section~\ref{sec:discrete_subgroups}. Specifically, we build up and employ a normal form theory for elements of~$\Delta_{n,\K}^\ast$. To conclude the maximality of~$\Delta_{n,\K}^\ast$, we crucially employ Corollary~\ref{Corollary1_neu}, which itself rests on a characterization of discrete subgroups of~$\SU(n,n; \K) \subseteq \SU(n,n; \C)$ that extend the Hermitian modular group.

Due to the work of Borcherds there is special interest in the case $n = 2$, which can be viewed as an orthogonal group $\SO(2,4)$. In Theorem~\ref{Theorem3_neu} we derive an explicit isomorphism. We describe the extended Hermitian modular group of degree~$2$ explicitly by means of generalized Atkin-Lehner involutions and show that it admits a natural description in the orthogonal context.

\section{Preliminaries}
\label{Preliminaries}

The special unitary group $\SU(n,n;\C)$ consists of all matrices
\begin{gather*}\tag{1}\label{gl_1}
M=\left(\begin{smallmatrix}
          A & B \\ C & D
        \end{smallmatrix}\right)
  \in SL_{2n}(\C) \;\text{satisfying}\; \overline{M}^{tr} JM = J, \;
  J = \left(\begin{smallmatrix}
   0 & -I \\ I & 0
  \end{smallmatrix}\right), \; 
  I = \left(\begin{smallmatrix}
   1 & & 0 \\ & \ddots & \\ 0 & & 1
  \end{smallmatrix}\right), 
\end{gather*}
where the blocks $A,B,C,D$ are always square matrices.

\begin{lemma}\label{Lemma1}
Given $M= \left(\begin{smallmatrix}
   A & B \\ C & D
  \end{smallmatrix}\right) \in \SU(n,n;\C)$ then
  \[
   \det A, \;\det B, \;\det C,\; \det D \in\R.
  \]
\end{lemma}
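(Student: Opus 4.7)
The plan is to translate the defining relation $\overline{M}^{tr} J M = J$ into block identities, use a block LDU factorization to convert $\det M = 1$ into a relation between $\det A$ and $\overline{\det A}$, and finally reduce the statements about $B,C,D$ to the case of $A$ by multiplying $M$ by suitable powers of $J$.

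Concretely, I would first expand $\overline{M}^{tr} J M = J$ block-wise, obtaining
\[
\overline{A}^{tr} C = \overline{C}^{tr} A, \quad \overline{B}^{tr} D = \overline{D}^{tr} B, \quad \overline{A}^{tr} D - \overline{C}^{tr} B = I, \quad \overline{D}^{tr} A - \overline{B}^{tr} C = I.
\]
Assuming $A$ invertible (the case $\det A = 0$ is vacuous), factor
\[
M = \begin{pmatrix} I & 0 \\ CA^{-1} & I \end{pmatrix} \begin{pmatrix} A & B \\ 0 & D - CA^{-1}B \end{pmatrix} \begin{pmatrix} I & A^{-1}B \\ 0 & I \end{pmatrix},
\]
so that $1 = \det M = \det A \cdot \det(D - CA^{-1} B)$. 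The crucial step is that the Schur complement collapses explicitly: the first block identity gives $CA^{-1} = (\overline{A}^{tr})^{-1} \overline{C}^{tr}$, and combined with the third,
\[
CA^{-1} B = (\overline{A}^{tr})^{-1}(\overline{A}^{tr} D - I) = D - (\overline{A}^{tr})^{-1}.
\]
Hence $D - CA^{-1}B = (\overline{A}^{tr})^{-1}$ and $\det(D - CA^{-1}B) = 1/\overline{\det A}$, so $\det A \cdot \overline{\det A}^{-1} = 1$, i.e.\ $\det A \in \R$.

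For $\det B$, $\det C$, $\det D$ I would exploit that $J$ itself belongs to $SU(n,n;\C)$, a direct check from $\overline{J}^{tr} J J = -J^{tr} \cdot (-I) = J$ and $\det J = 1$. Therefore $MJ$, $JM$, and $J^{-1} M J$ all lie in $SU(n,n;\C)$, with upper-left blocks $B$, $-C$, and $D$ respectively. Applying the result already established for the upper-left block to these three matrices yields the remaining reality statements. The only substantive calculation in the argument is the Schur-complement collapse, which reduces to two substitutions from the block identities, so I do not foresee any genuine obstacle.
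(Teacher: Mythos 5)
Your argument is correct and is essentially the paper's proof: the identity $D-CA^{-1}B=(\overline{A}^{tr})^{-1}$ that you derive from the block relations is exactly the paper's conjugation of $M$ to $\left(\begin{smallmatrix} A & 0 \\ 0 & \overline{A}^{tr\,-1}\end{smallmatrix}\right)$, and the reduction of $B,C,D$ to the $A$-block by multiplying with $J$ is the same. One typo: in your displayed factorization the middle factor must have $0$ (not $B$) in its upper-right block if you keep the right unipotent factor; this does not affect the determinant identity you actually use.
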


\begin{proof}
It suffices to show the result for the A-block, because the other cases are obtained from multiplication with $J$. As $\det A = 0$ is clear, let $\det A \neq 0$. Then we have
\[
 \begin{pmatrix}
  I & 0 \\ -CA^{-1} & I
 \end{pmatrix} M 
 \begin{pmatrix}
  I & -A^{-1}B \\ 0 & I
 \end{pmatrix}  = 
 \begin{pmatrix}
  A & 0 \\ 0 & \overline{A}^ {tr-1}
 \end{pmatrix} 
\]
due to \eqref{gl_1}. Hence
\[
 1 = \det M = (\det A)\big/\,\overline{(\det A)}
\]
yields the claim.
\end{proof}

The group $\SU(n,n;\C)$ acts on the Hermitian half-space (cf.~\cite{B2})
\[
 \H_n:= \{Z\in \C^{n\times n};\; \tfrac{1}{2i}(Z-\overline{Z}^{tr})\;\text{positive definite}\}
\]
via
\[
 Z\mapsto M\langle Z\rangle = (AZ+B)(CZ+D)^{-1}.
\]

Throughout this paper let
\[
 \K = \Q(\sqrt{-m}) \subseteq \C, \;\; m\in \N \;\;\text{squarefree},
\]
be an imaginary-quadratic number field. Its discriminant and ring of integers are 
\[
 d_\K = \begin{cases}
        -m \\ -4m
       \end{cases}
\text{and}\;\;\oh_\K = \Z+ \Z\omega_\K =\begin{cases}
                               \Z+\Z(1+\sqrt{-m})/2, & \text{if}\;\,m\equiv 3\!\!\!\pmod{4},  \\
                               \Z+\Z\sqrt{-m}, & \text{if}\;\, m\equiv 1,2 \!\!\!\pmod{4}.
                              \end{cases}
\]
Denote its unit group by $\oh_\K^*$.

The \textit{Hermitian modular group} of degree $n$ is given by 
\[
 \Gamma_n(\oh_\K):= \SU(n,n;\C)\cap \oh_\K^{2n\times 2n}.
\]
It is well-known that $\Gamma_1(\oh_\K) = SL_2(\Z)$.

\subsection{Reduction to triangular form}

We review two lemmas on discrete subgroups~$\Delta_{n,\K}$ of~$\SU(n,n;\C)$, whose proofs follow closely previous work. In the first lemma we determine the integrality properties of the entries of any~$M \in \Delta_{n,\K}$.

\begin{lemma}\label{Lemma2}
Let $\Delta_{n,\K}$ be a discrete subgroup of $\SU(n,n;\C)$ containing $\Gamma_n(\oh_\K)$ or a subgroup of $\SU(n,n;\C)$, which contains $\Gamma_n(\oh_\K)$ as a normal subgroup. Given $M\in\Delta_{n,\K}$ there exists $u\in \C\backslash\{0\}$ such that
 \[
 uM\in \oh_\K^{2n\times 2n}.
 \]
 Any such $u$ satisfies $\ell = |u|^2 \in \N$.
\end{lemma}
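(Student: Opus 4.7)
I would prove the two assertions in reverse order, since the identity $\ell=|u|^2\in\N$ is essentially formal while the existence of $u$ is where the arithmetic hypothesis on $\Delta_{n,\K}$ enters.

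For the modulus claim, from \eqref{gl_1} one derives $M^{-1}=-J\,\overline{M}^{tr}J$ (since $J^{-1}=-J$), and hence the equivalent ``mirror'' relation $M\,J\,\overline{M}^{tr}=J$. Scaling by $u\overline{u}$ gives
\[
 (uM)\,J\,\overline{(uM)}^{tr}\;=\;|u|^2\,J.
\]
If $uM\in\oh_\K^{2n\times 2n}$, the left-hand side has entries in $\oh_\K$, so $|u|^2\,J\in\oh_\K^{2n\times 2n}$ and $|u|^2\in\oh_\K\cap\R_{>0}=\N$, using that $\K$ is imaginary quadratic.

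For the existence of $u$, both hypotheses on $\Delta_{n,\K}$ place $M$ in the commensurator of $\Gamma_n(\oh_\K)$ in $SU(n,n;\C)$: in the normal-subgroup case directly, and in the discrete case because $\Gamma_n(\oh_\K)$ is an arithmetic lattice of finite covolume and so has finite index in every discrete supergroup. Consequently $\Gamma':=M^{-1}\Gamma_n(\oh_\K)M\cap\Gamma_n(\oh_\K)$ has finite index in $\Gamma_n(\oh_\K)$, so there exists $k\in\N$ with $T(kS),\,T^{\ast}(kS)\in\Gamma'$ for every Hermitian $S\in\oh_\K^{n\times n}$, where $T(S)=\left(\begin{smallmatrix} I & S\\ 0 & I\end{smallmatrix}\right)$ and $T^{\ast}(S)=\left(\begin{smallmatrix} I & 0\\ S & I\end{smallmatrix}\right)$. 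Writing $M=\left(\begin{smallmatrix} A & B\\ C & D\end{smallmatrix}\right)$ and using the identities $A\overline{D}^{tr}-B\overline{C}^{tr}=I$, $A\overline{B}^{tr}=B\overline{A}^{tr}$, $C\overline{D}^{tr}=D\overline{C}^{tr}$ derived from \eqref{gl_1}, a direct block calculation produces
\[
 M\,T(kS)\,M^{-1}=\begin{pmatrix} I-k\,AS\overline{C}^{tr} & k\,AS\overline{A}^{tr}\\[.3em] -k\,CS\overline{C}^{tr} & I+k\,CS\overline{A}^{tr}\end{pmatrix}\in\oh_\K^{2n\times 2n},
\]
and an analogous formula with $(A,C)$ replaced by $(B,D)$ for $T^{\ast}(kS)$. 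Specializing $S$ to the $\Z$-basis $\{E_{ii},\,E_{ij}+E_{ji},\,\omega_\K E_{ij}+\overline{\omega_\K}E_{ji}\}$ of integral Hermitian matrices reduces these matrix bounds to entrywise bilinear integralities $m_{ij}\overline{m_{k\ell}}\in\tfrac{1}{k'}\oh_\K$ for a common denominator $k'$, valid for all pairs of entries drawn from a single block cluster $\{A,C\}$ or $\{B,D\}$.

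The crux, which I expect to be the main obstacle, is upgrading these cluster-bilinear bounds into a single scalar $u\in\C^{\times}$ with $uM\in\oh_\K^{2n\times 2n}$. Fixing a nonzero pivot entry $m_0$ of the $\{A,C\}$-cluster, I set $u_1:=k'\,\overline{m_0}$ and read off $u_1\,x\in\oh_\K$ for every entry $x$ of $A$ or $C$; a parallel pivot for $\{B,D\}$ produces $u_2$ working for those blocks. To couple the two denominators I would exploit the $SU(n,n)$-relation $A\overline{D}^{tr}-B\overline{C}^{tr}=I$: rewriting it as $\tfrac{1}{u_1\overline{u_2}}(u_1 A)\overline{(u_2 D)}^{tr}-\tfrac{1}{u_2\overline{u_1}}(u_2 B)\overline{(u_1 C)}^{tr}=I$ with both matrix products already integral forces the ratio $u_1/u_2$ into $\K^{\times}$. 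A common scalar $u\in\C^{\times}$ clearing all four blocks simultaneously then exists, and the first paragraph automatically gives $\ell=|u|^2\in\N$.
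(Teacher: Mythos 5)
Your strategy coincides with the paper's: finite index of $\Gamma_n(\oh_\K)$ in $\Delta_{n,\K}$ via the finite-covolume/Ramanathan argument, conjugation of the unipotent matrices $T(kS)$, $T^{*}(kS)$ to obtain bilinear integrality of products of entries, and the scaled relation $\overline{(uM)}^{tr}J(uM)=|u|^2J$ for the claim $|u|^2\in\oh_\K\cap\R=\Z$. Your block formula for $MT(kS)M^{-1}$ is correct, and you rightly identify as the crux the passage from the cluster-wise relations $k'x\overline{y}\in\oh_\K$ (for $x,y$ both entries of $\{A,C\}$, or both of $\{B,D\}$) to a single scalar $u$ --- a step the paper itself passes over with ``Thus the existence of $u$ follows.''

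Your coupling argument for that last step, however, has a gap. With the pivots you get that all entries of $A,C$ lie in $\tfrac{1}{u_1}\oh_\K$ and all entries of $B,D$ in $\tfrac{1}{u_2}\oh_\K$, and you need $u_1/u_2\in\K^{\times}$, equivalently $w:=u_1\overline{u_2}\in\K$ (since $|u_2|^2=k'(k'm_0'\overline{m_0'})\in\Z$). But the identity $A\overline{D}^{tr}-B\overline{C}^{tr}=I$ places $A\overline{D}^{tr}$ in $\tfrac{1}{w}\oh_\K^{n\times n}$ and $B\overline{C}^{tr}$ in $\tfrac{1}{\overline{w}}\oh_\K^{n\times n}$, i.e.\ in two \emph{conjugate} lines. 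Entrywise it reads $\overline{w}p_{ij}-wq_{ij}=|w|^2\delta_{ij}$ with $p_{ij},q_{ij}\in\oh_\K$ and $|w|^2\in\N$; together with its complex conjugate this is a real-linear system in $w$ with determinant $2i\,(|p_{ij}|^2-|q_{ij}|^2)$. In the degenerate case $|p_{ij}|=|q_{ij}|$ for all $i,j$ one only obtains $w^2\in\K$ from the off-diagonal entries and the single real condition $2\re(w\overline{p_{ii}})=|w|^2$ from the diagonal, which do not force $w\in\K$; you have not excluded this case, so ``forces the ratio $u_1/u_2$ into $\K^{\times}$'' is unproved as stated. The repair is to couple through the \emph{other} form of the defining relations, $\overline{A}^{tr}D-\overline{C}^{tr}B=I$ coming from $\overline{M}^{tr}JM=J$: both $\overline{(u_1A)}^{tr}(u_2D)$ and $\overline{(u_1C)}^{tr}(u_2B)$ lie in $\oh_\K^{n\times n}$ and equal $\overline{u_1}u_2$ times the respective products, so their difference gives $\overline{u_1}u_2\,I\in\oh_\K^{n\times n}$, hence $c:=\overline{u_1}u_2\in\oh_\K$ immediately. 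Then $u:=|u_1|^2u_2$ does the job: for an entry $y$ of $B$ or $D$ one has $uy=|u_1|^2(u_2y)\in\oh_\K$ because $|u_1|^2\in\Z$, and for an entry $x$ of $A$ or $C$ one has $ux=c\,(u_1x)\in\oh_\K$. With this substitution your proof is complete and matches the paper's route, merely making explicit what the paper leaves implicit.
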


\begin{proof}
 $\Gamma_n(\oh_\K)$ possesses a fundamental domain of finite positive volume in $\H_n$ due to \cite{B2}. 
If $\Delta_{n,\K}$ is discrete, we can proceed in exactly the same way as Ramanathan \cite{Ra} in the proof of his Theorem 1. We have $r=[\Delta_{n,\K}:\Gamma_n(\oh_\K)]< \infty$ and conclude
\[
 (MRM^{-1})^s \in\Gamma_n(\oh_\K) \;\,\text{for all}\;\, R\in\Gamma_n(\oh_\K), \;\, s:=r!\,.
\]
We use this for
\begin{gather*}\tag{2}\label{gl_2}
 R = \begin{pmatrix}
      I & H \\ 0 & I
     \end{pmatrix}, \;
 \begin{pmatrix}
      I & 0 \\ H & I
     \end{pmatrix}, \;\;    
H = \overline{H}^{tr} \in\oh_\K^{n\times n}.
\end{gather*}
If $ R= \left(\begin{smallmatrix}
               A^* & B^* \\ C^* & D^*
              \end{smallmatrix}\right)$, $A^* = (a_{ij})$,
we end up with
\[
s a_{ij} \overline{a}_{k\ell} \in\oh_\K \;\; \text{for all}\;\;i,j,k,\ell = 1,\ldots,n.
\]
The same holds for $B^*$, $C^*$, $D^*$. Thus the existence of $u\in \C\backslash\{0\}$ satisfying
\[
 uM = L\in \oh_\K^{2n\times 2n}
\]
follows. The identity
\[
 |u|^2 J = (\overline{uM})^{tr} J (uM) = \overline{L}^{tr} J L
\]
yields $|u|^2 \in \oh_\K\cap \R = \Z$, because the elements on the right hand side are integral. If $\Gamma_n(\oh_\K)$ is normal in $\Delta_{n,\K}$, we can take the same arguments with $s=1$.
\end{proof}

The second lemma in this section is a result of Ensenbach~\cite{En} (cf.~\cite{Rm}), which allows us to simplify the shape of $M \in \Delta_{n,\K}$.

\begin{lemma}\label{Lemma3}
Let $\Delta_{n,\K}$ be a discrete subgroup of $\SU(n,n;\C)$ containing $\Gamma_n(\oh_\K)$ or a subgroup of $\SU(n,n;\C)$, which contains $\Gamma_n(\oh_\K)$ as a normal subgroup. Given $M\in \Delta_{n,\K}$ there exists an $R\in\Gamma_n(\oh_\K)$ such that
 \[
  RM = \begin{pmatrix}
        A^* & B^* \\ 0 & D^*
       \end{pmatrix}, \;\; A^*\in SL_n(\C).
 \]
\end{lemma}

\begin{proof}
Apply Lemma \ref{Lemma2} and let $L= uM = \left(\begin{smallmatrix}
             A & B \\ C & D
            \end{smallmatrix}\right) \in \oh_\K^{2n\times 2n}$. 
Then $(\overline{C}^{tr}, \overline{A}^{tr})$ is a Hermitian pair in the sense of Braun \cite{B}. Due to \cite {B}, Theorem \ref{Theorem3_neu}, there exists a coprime pair in this class, which can be completed to a matrix in $\Gamma_n(\oh_\K)$. Multiplying by its inverse yields the shape of $RM$. If $\Delta_{n,\K}$ is discrete, we obtain $|\det A^*| = 1$ as the index $[\Delta_{n,\K}:\Gamma_n(\oh_\K)]$ is finite. If $\Gamma_n(\oh_\K)$ is nomal in $\Delta_{n,\K}$ then \eqref{gl_2} for $H=I$ yields $|\det A^*|^2$, $|\det D^*|^2 \in \Z$, hence $|\det A^*| = 1$. In view of Lemma \ref{Lemma1} we may multiply by 
\[
 \begin{pmatrix}
			    P & 0 \\ 0 & P
			    \end{pmatrix},\;\; P = \diag (1,\ldots,1,-1), 
\]
in order to obtain $\det A^* = 1$.          
\end{proof}

\section{Consequences for discrete subgroups}
\label{sec:discrete_subgroups}

We start our investigation of discrete subgroups of~$\SU(n,n; \C)$, with the special case of subgroups of~$\SU(n,n;\K)$.

\begin{lemma}\label{Lemma5_neu} 
Let $\Delta_{n,\K}$ be a discrete subgroup of $\SU(n,n;\K)$ containing $\Gamma_n(\oh_\K)$. Then 
\[
 \Delta_{n,\K} = \Gamma_n(\oh_\K).
\]
\end{lemma}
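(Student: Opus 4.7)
The plan is to prove that every $M\in\Delta_{n,\K}$ already lies in $\Gamma_n(\oh_\K)$. First I would apply Lemma \ref{Lemma3} to find $R\in\Gamma_n(\oh_\K)$ with $N:=RM=\left(\begin{smallmatrix} A^* & B^* \\ 0 & D^* \end{smallmatrix}\right)$ block upper triangular and $\det A^*=1$. Since both $M$ and $R$ have entries in $\K$, we get $A^*\in SL_n(\K)$ and $B^*,D^*\in\K^{n\times n}$. Because $R^{-1}\in\Gamma_n(\oh_\K)\subseteq\oh_\K^{2n\times 2n}$, it suffices to show $N\in\oh_\K^{2n\times 2n}$; this then yields $N\in SU(n,n;\C)\cap\oh_\K^{2n\times 2n}=\Gamma_n(\oh_\K)$ and $M=R^{-1}N\in\Gamma_n(\oh_\K)$.

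Next, Lemma \ref{Lemma2} applied to $N$ produces $u\in\C\setminus\{0\}$ with $uN\in\oh_\K^{2n\times 2n}$ and $\ell:=|u|^2\in\N$. Since the entries of $N$ lie in $\K$, comparing a nonzero entry of $N$ with the corresponding entry of $uN$ forces $u\in\K^*$. From $uA^*\in\oh_\K^{n\times n}$ and $\det A^*=1$ we obtain $u^n=\det(uA^*)\in\oh_\K$; as $\oh_\K$ is integrally closed in $\K$, the monic equation $x^n-u^n=0$ shows that $u$ is integral over $\oh_\K$, hence $u\in\oh_\K$. The $SU$-block relation $D^*=(\overline{A^*}^{tr})^{-1}$ further gives $\overline{uD^*}^{tr}=\bar u(A^*)^{-1}\in\oh_\K^{n\times n}$, and multiplying produces $(uA^*)\,\overline{uD^*}^{tr}=\ell I$ with both factors integral.

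The decisive remaining task is to upgrade $u\in\oh_\K$ to $u\in\oh_\K^*$, i.e.\ $\ell=1$; once this is done $N=u^{-1}(uN)\in\oh_\K^{2n\times 2n}$ and we are finished. Discreteness enters via the conjugation identity
\[
N\begin{pmatrix} I & K \\ 0 & I \end{pmatrix} N^{-1}=\begin{pmatrix} I & A^*K\overline{A^*}^{tr} \\ 0 & I \end{pmatrix},\qquad K=\overline{K}^{tr}\in\oh_\K^{n\times n},
\]
combined with the Ramanathan-style argument from the proof of Lemma \ref{Lemma2}: raising to $s:=[\Delta_{n,\K}:\Gamma_n(\oh_\K)]!$ gives $s\,A^*K\overline{A^*}^{tr}\in\oh_\K^{n\times n}$ for every Hermitian $K\in\oh_\K^{n\times n}$. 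Writing $A^*=u^{-1}P$ with $P:=uA^*\in\oh_\K^{n\times n}$ and $\det P=u^n$, this reads $(s/\ell)\,PK\overline{P}^{tr}\in\oh_\K^{n\times n}$. Combining with the factorization $P\cdot\overline{uD^*}^{tr}=\ell I$ and examining the elementary divisors of $P$ in each local ring $\oh_{\K,\mathfrak p}$, one aims to force $v_\mathfrak p(u)=0$ at every prime $\mathfrak p$ of $\oh_\K$. This final valuation step is the main obstacle: since non-unit elements of $\oh_\K$ with integer norm abound, ruling out $\ell>1$ requires carefully combining the $SU(n,n;\K)$-structure with discreteness across split, inert, and ramified primes simultaneously, balancing the $\det A^*=1$ constraint against the integrality of both $uA^*$ and $\bar u(A^*)^{-1}$.
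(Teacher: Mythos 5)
Your reduction to showing that $N=RM$ has entries in $\oh_\K$ is sound, and the preliminary observations ($u\in\K^*$, $u^n=\det(uA^*)\in\oh_\K$, hence $u\in\oh_\K$ by integral closedness) are correct. But the proof is not complete: the step you yourself flag as ``the decisive remaining task'' --- forcing $v_\mathfrak{p}(u)=0$ at every prime, equivalently showing that $N$ is already integral --- is exactly the content of the lemma, and the tools you assemble for it do not suffice. The Ramanathan-type relation $s\,A^*K\overline{A^*}^{tr}\in\oh_\K^{n\times n}$ only bounds valuations of products $a_{ij}\overline{a_{kj}}$ from below by $-v_\mathfrak{p}(s)$; it gives a fixed lower bound on $2\,v_\mathfrak{p}(a_{ij})$ but cannot push it up to $0$, and no amount of local elementary-divisor bookkeeping on a single matrix $P=uA^*$ will close that gap. (Also note that $\ell=1$ cannot hold for an arbitrary $u$ furnished by Lemma \ref{Lemma2} --- one may always replace $u$ by $2u$ --- so the claim would at best hold for a suitably normalized $u$, which you do not pin down.)

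The missing idea, which the paper uses, is to exploit \emph{powers} of $M$ rather than a single conjugation relation. Assuming some entry has negative valuation at a prime $\wp$, one first arranges (by left and right multiplication with explicit unimodular matrices, which changes neither the coset nor the ideal generated by the entries) that $e(a_{11},\wp)$ is minimal among all entries and \emph{strictly} smaller than the valuations of the other entries in the first row and column of $A^*$. Then the $(1,1)$-entry of $(A^*)^r$ has valuation exactly $r\,e(a_{11},\wp)\to-\infty$, so the ideals $\Ical(M^r)$ are pairwise distinct and the cosets $\Gamma_n(\oh_\K)M^r$, $r\in\N_0$, are pairwise distinct --- contradicting the finiteness of $[\Delta_{n,\K}:\Gamma_n(\oh_\K)]$. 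This additivity-under-powering argument is what converts a single negative valuation into infinitely many distinct cosets, and it is the ingredient your proposal lacks.
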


\begin{proof}
We assume that there exists an $M\in\Delta_{n,\K}$ with an entry in $\K\backslash \oh_\K$. Then there is a prime ideal $\wp\subseteq \oh_\K$ and an entry $x$ of $M=(m_{ij})$ such that the exponent $e(x,\wp)$ of the prime ideal decomposition of $\oh_\K x$ is $< 0$. Choose $M= \left(\begin{smallmatrix}
	A & B \\ 0 & D
	\end{smallmatrix}\right)$, $A=(a_{ij})$, as in Lemma \ref{Lemma3} with $C=0$ and assume without restriction that
\[
 e(a_{11},\wp) \leq \min\{e(m_{ij},\wp),\;1\leq i,j \leq 2n\}.
\]
After multiplying with matrices
\[
 \begin{pmatrix}
  \overline{U}^{tr} & 0 \\  0 & U^{-1}
 \end{pmatrix}, \;
  U = \begin{pmatrix}
  1 & 0 \\  g & I
 \end{pmatrix}, \;
 \begin{pmatrix}
  1 & \overline{g}^{tr} \\  0 & I
 \end{pmatrix}, \;\; g\in\oh_\K^{n-1},
\]
we may assume 
\[
 e(a_{11},\wp)<\min\{e(a_{j1},\wp),\,e(a_{1j},\wp),\, j=2,\ldots,n\}.
\]
Considering $A^r= (a_{ij}^*)$, $r\in \N$, we get
\begin{align*}
 & e(a_{11}^*,\wp) = r \, e(a_{11},\wp) \leq e(a_{ij}^*,\wp), \;\, 1\leq i,j \leq n\},  \\
 & e(a_{11}^*,\wp) < \min\{ e(a_{j1}^*,\wp),\, e(a_{1j}^*,\wp), \;\, j= 2,\ldots,n\}.
\end{align*}
Since the multiplication by unimodular matrices does not change the ideal generated by the entries of $M^r$, the cosets 
\[
 \Gamma_n(\oh_\K) M^r,\;\, r\in\N_0,
\]
are mutually distinct. This contradicts
\[
 [\Delta_{n,\K}:\Gamma_n(\oh_\K)] < \infty.
\]
\vspace*{-8ex}\\
\end{proof}
\vspace{0,5ex}

An immediate application is

\begin{corollary}\label{Corollary1_neu} 
Let $\Delta_{n,\K}$ be a discrete subgroup of $\SU(n,n;\C)$ containing $\Gamma_n(\oh_\K)$. Then $\Delta_{n,\K}$ is contained in the normalizer of $\Gamma_n(\oh_\K)$ in $\SU(n,n;\C)$.
\end{corollary}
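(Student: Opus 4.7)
\smallskip
\noindent\textbf{Proof plan.} My strategy is to reduce the corollary to Lemma~\ref{Lemma5_neu}. The point is that although an arbitrary $M\in\Delta_{n,\K}$ need not have entries in $\K$, its conjugation action on $\Gamma_n(\oh_\K)$ does land in $SU(n,n;\K)$. Granting this, $\Delta_{n,\K}\cap SU(n,n;\K)$ is a discrete subgroup of $SU(n,n;\K)$ containing $\Gamma_n(\oh_\K)$, and Lemma~\ref{Lemma5_neu} forces it to equal $\Gamma_n(\oh_\K)$, so every conjugate $MRM^{-1}$ lies back in $\Gamma_n(\oh_\K)$.

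The concrete input is Lemma~\ref{Lemma2}: given $M\in\Delta_{n,\K}$, pick $u\in\C\setminus\{0\}$ with $L:=uM\in\oh_\K^{2n\times 2n}$ and $\ell:=|u|^2\in\N$. From \eqref{gl_1} one obtains $M^{-1}=-J\overline{M}^{tr}J$, which gives
\[
M^{-1}=\overline{u}^{-1}\bigl(-J\,\overline{L}^{tr}\,J\bigr).
\]
For any $R\in\Gamma_n(\oh_\K)$ this yields
\[
MRM^{-1}=(u\overline{u})^{-1}\, L\,R\,\bigl(-J\,\overline{L}^{tr}\,J\bigr)=\ell^{-1}\, L\,R\,\bigl(-J\,\overline{L}^{tr}\,J\bigr).
\]
Since $\oh_\K$ is closed under complex conjugation, $-J\,\overline{L}^{tr}\,J\in\oh_\K^{2n\times 2n}$, so $MRM^{-1}$ has all entries in $\ell^{-1}\oh_\K\subset\K$. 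As $\Delta_{n,\K}$ is a subgroup, $MRM^{-1}\in\Delta_{n,\K}$ as well, hence $MRM^{-1}\in\Delta_{n,\K}\cap SU(n,n;\K)$.

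Lemma~\ref{Lemma5_neu} applies to the discrete subgroup $\Delta_{n,\K}\cap SU(n,n;\K)\supseteq\Gamma_n(\oh_\K)$ and identifies it with $\Gamma_n(\oh_\K)$. This gives $M\Gamma_n(\oh_\K)M^{-1}\subseteq\Gamma_n(\oh_\K)$. Running the same argument with $M^{-1}\in\Delta_{n,\K}$ in place of $M$ supplies the reverse inclusion, so $M$ normalizes $\Gamma_n(\oh_\K)$. The one non-formal step, and effectively the only thing to verify, is that conjugation by $M$ preserves $\K$-rationality; the symplectic--unitary identity combined with the integrality bound $|u|^2\in\N$ from Lemma~\ref{Lemma2} makes this automatic, which is why the statement appears naturally as a corollary.
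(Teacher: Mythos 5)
Your argument is correct and follows essentially the same route as the paper: scale $M$ to an integral matrix $L$ via Lemma~\ref{Lemma2}, observe that $MRM^{-1}=\ell^{-1}LR(-J\overline{L}^{tr}J)$ has entries in $\K$ so that the conjugates land in $\Delta_{n,\K}\cap SU(n,n;\K)$, and then invoke Lemma~\ref{Lemma5_neu}. The only cosmetic difference is that the paper applies Lemma~\ref{Lemma5_neu} to the subgroup generated by $\Gamma_n(\oh_\K)$ and $M\Gamma_n(\oh_\K)M^{-1}$ rather than to all of $\Delta_{n,\K}\cap SU(n,n;\K)$, and your explicit remark that the reverse inclusion comes from running the argument with $M^{-1}$ is a point the paper leaves implicit.
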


\begin{proof}
Let $M\in\Delta_{n,\K}$, $u\in\C\backslash\{0\}$, $M=\frac{1}{u}L$, $L\in\oh_\K^{2n\times 2n}$ according to Lemma \ref{Lemma2}. Then
\[
 M\Gamma_n(\oh_\K) M^{-1} = L \Gamma_n(\oh_\K) L^{-1} \subseteq \Delta_{n,\K} \cap \SU(n,n;\K).
\]
Hence we can apply Lemma \ref{Lemma5_neu} to the group generated by $\Gamma_n(\oh_\K)$ and $M\Gamma_n(\oh_\K) M^{-1}$. The result is
\[
 M\Gamma_n(\oh_\K) M^{-1} = \Gamma_n(\oh_\K).
\]
\vspace*{-7ex}\\
\end{proof}
\vspace{0,5ex}

Now let $M=\frac{1}{u}L$, $L\in\oh_\K^{2n\times 2n}$, belong to the normalizer of $\Gamma_n(\oh_\K)$ in $\SU(n,n;\C)$, $\ell = u\overline{u}\in\N$ and recall that $\Ical(L)$ stands for the ideal generated by the entries of $L$. If $\Ncal$ denotes the reduced norm of an ideal in $\oh_\K$, we obtain

\begin{lemma}\label{Lemma6_neu} 
$\Ical(L)$ is an invariant of the double coset with respect to $\Gamma_n(\oh_\K)$. One has  \\[0,5ex]
a) $\ell = \Ncal(\Ical(L))$, $\ell \oh_\K = \Ical(L) \cdot \overline{\Ical(L)}$,  \\ [0,5ex]
b) $u^n \in \oh_\K$, $u^n \oh_\K = \Ical(L)^n$.
\end{lemma}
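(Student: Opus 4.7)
The plan is to handle invariance of $\Ical(L)$, reduce to a block upper-triangular representative via Lemma~\ref{Lemma3}, read off $u^n\in\oh_\K$, and then deduce both (a) and (b) from the fact that the lattice $L\oh_\K^{2n}$ is $\Gamma_n(\oh_\K)$-invariant.

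Invariance of $\Ical(L)$ under double cosets is immediate from $\Gamma_n(\oh_\K)\subseteq GL_{2n}(\oh_\K)$, since unimodular multiplication on either side preserves the ideal generated by the entries. By Lemma~\ref{Lemma3} I may replace $L$ with $L^\flat=\bigl(\begin{smallmatrix}\alpha & \beta\\0 & \delta\end{smallmatrix}\bigr)$, where $\alpha=uA^{\ast}$, $A^{\ast}\in SL_n(\C)$, so $\det\alpha=u^n$. Since $L^\flat\in\oh_\K^{2n\times 2n}$, we have $\alpha\in\oh_\K^{n\times n}$ and hence $u^n=\det\alpha\in\oh_\K$, giving the first assertion of (b). The relation $\overline{L^\flat}^{\,tr}JL^\flat=\ell J$ further yields $\bar\alpha^{tr}\delta=\ell I$, i.e.\ $\delta=\ell\,\bar\alpha^{-tr}$, and $\det\delta=u^n$ as well.

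For the identity $u^n\oh_\K=\Ical(L)^n$, the inclusion $u^n\oh_\K\subseteq\Ical(L)^n$ comes from $u^n=\det\alpha\in\Ical(\alpha)^n\subseteq\Ical(L)^n$. For the reverse inclusion I use that $\Lambda=L\oh_\K^{2n}$ is $\Gamma_n(\oh_\K)$-invariant: for $\gamma\in\Gamma_n(\oh_\K)$, the conjugate $L^{-1}\gamma L=M^{-1}\gamma M$ lies in $\Gamma_n(\oh_\K)\subseteq GL_{2n}(\oh_\K)$, whence $\gamma\Lambda=L\,(L^{-1}\gamma L)\oh_\K^{2n}=\Lambda$. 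The subgroup of $\Gamma_n(\oh_\K)$ generated by the block-diagonal permutations $\bigl(\begin{smallmatrix}P & 0\\0 & P\end{smallmatrix}\bigr)$, the involution $J$, and the translations $\bigl(\begin{smallmatrix}I & H\\0 & I\end{smallmatrix}\bigr)$ with Hermitian $H\in\oh_\K^{n\times n}$ acts transitively on the standard basis of $\oh_\K^{2n}$ and implements arbitrary $\oh_\K$-scalar shifts between coordinates; a standard argument then forces every $\Gamma_n(\oh_\K)$-invariant full-rank $\oh_\K$-sublattice of $\oh_\K^{2n}$ to be of the form $\mathfrak{a}\oh_\K^{2n}$ for a single fractional ideal $\mathfrak{a}$. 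All invariant factors of $L$ over the Dedekind domain $\oh_\K$ then coincide with $\mathfrak{a}$, which equals $\Ical(L)$ because the first invariant factor agrees with the content ideal. Taking determinants, $\mathfrak{a}^{2n}=\det(L)\oh_\K=u^{2n}\oh_\K=(u^n\oh_\K)^2$; since the group of fractional ideals of $\oh_\K$ is torsion-free, we conclude $\Ical(L)^n=\mathfrak{a}^n=u^n\oh_\K$.

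Part (a) follows by complex conjugation: $\overline{\Ical(L)}^n=\bar u^n\oh_\K$, hence $(\Ical(L)\overline{\Ical(L)})^n=\ell^n\oh_\K=(\ell\oh_\K)^n$, and torsion-freeness once more yields $\Ical(L)\overline{\Ical(L)}=\ell\oh_\K$; the norm identity $\Ncal(\Ical(L))=\ell$ is then automatic from $\mathfrak{a}\bar{\mathfrak{a}}=\Ncal(\mathfrak{a})\oh_\K$ for fractional ideals of $\oh_\K$. The main obstacle is the classification of $\Gamma_n(\oh_\K)$-invariant full-rank sublattices of $\oh_\K^{2n}$: while transitivity on basis vectors is easy, combining it with the translation action to force coincidence of all elementary divisors of $L$ is the crux of the argument.
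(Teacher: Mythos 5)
Your overall strategy --- classify the $\Gamma_n(\oh_\K)$-invariant lattice $\Lambda=L\oh_\K^{2n}$ and read everything off its elementary divisors --- is a genuinely different route from the paper's, which instead obtains $\Ical(L)\cdot\overline{\Ical(L)}=\ell\oh_\K$ directly from the block relation $A\overline{D}^{tr}=\ell I$ together with the congruences $x\overline{y}\in\ell\oh_\K$ already established in the proof of Lemma~\ref{Lemma2}, and then compares reduced norms. However, the step you yourself flag as the crux is not correct as you state it. The translations $\left(\begin{smallmatrix}I&H\\0&I\end{smallmatrix}\right)$ with $H=\overline{H}^{tr}\in\oh_\K^{n\times n}$ do \emph{not} ``implement arbitrary $\oh_\K$-scalar shifts between coordinates'': the off-diagonal entries of a Hermitian $H$ are coupled in conjugate pairs, so from $v\in\Lambda$ you only obtain $h\,v_{n+j}e_i+\overline{h}\,v_{n+i}e_j\in\Lambda$ for $h\in\oh_\K$. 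Taking $\oh_\K$-linear combinations over $h\in\{1,\omega_\K\}$ decouples these only up to the factor $\omega_\K-\overline{\omega}_\K=\sqrt{d_\K}$, i.e.\ you get $\sqrt{d_\K}\,v_{n+j}e_i\in\Lambda$ rather than $v_{n+j}e_i\in\Lambda$. For $n\geq 2$ this leaves an ambiguity by divisors of the different --- precisely the kind of extra factor the lemma has to rule out --- so the ``standard argument'' does not close as described.

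The classification you want is nevertheless true, but the correct mechanism is multiplicative rather than additive: $\Lambda$ is a module over the $\oh_\K$-algebra $R$ generated by $\Gamma_n(\oh_\K)$ inside $\oh_\K^{2n\times 2n}$, and one shows $R=\oh_\K^{2n\times 2n}$ by using \emph{products} of unipotents, e.g.
\[
\begin{pmatrix}0&e_{ii}\\0&0\end{pmatrix}\begin{pmatrix}0&0\\e_{ij}+e_{ji}&0\end{pmatrix}=\begin{pmatrix}e_{ij}&0\\0&0\end{pmatrix},
\]
where the two factors arise from Hermitian translations of both types by subtracting $I$; Morita-type reasoning then forces $\Lambda=\mathfrak{a}\oh_\K^{2n}$. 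With the crux repaired this way, the rest of your argument (first elementary divisor equals the content ideal, determinant equals the product of the elementary divisors, torsion-freeness of the ideal group, and $\mathfrak{a}\overline{\mathfrak{a}}=\Ncal(\mathfrak{a})\oh_\K$) does go through, and your derivation of a) from b) by conjugation is fine. Note, though, that the paper's proof is substantially shorter because it never needs the full elementary divisor structure of $L$: the single identity $A\overline{D}^{tr}=\ell I$ plus the integrality $x\overline{y}\in\ell\oh_\K$ from Lemma~\ref{Lemma2} already pin down $\Ical(L)\cdot\overline{\Ical(L)}$, and a norm count then upgrades $u^n\oh_\K\subseteq\Ical(L)^n$ to equality.
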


\begin{proof}
$\Ical(L)$ does not change, if we multiply by unimodular matrices. Using Lemma \ref{Lemma3} with $A^* = \frac{1}{u} A$ we get 
\[
 u^n=\det A \in\oh_\K.
\]
Then $A\overline{D}^{tr} = \ell I$ yields $\ell\in \Ical(L)\cdot \overline{\Ical(L)}$. As the proof of Lemma \ref{Lemma2} shows
\[
 x\overline{y} \in\ell\oh_{\K} \quad \text{for all entries}\quad x,y \;\,\text{of}\;\, L,
\]
we obtain
\[
 \Ical(L)\cdot \overline{\Ical(L)} \subseteq \ell \oh_\K
\]
and therefore equality. Computing the reduced norm $\Ncal$ we get
\[
 \ell^2 = \Ncal(\ell \oh_\K) = \Ncal(\Ical(L)\cdot \overline{\Ical(L)}) = \Ncal(\Ical(L))^2.
\]
Computing the determinant we obtain
\[
 u^n= \det A \in \Ical(L)^n, \;\;\text{i.e.}\;\; u^n \oh_\K \subseteq \Ical(L)^n
\]
hence
\[
 \ell^n = |\det A|^2 = \Ncal(u^n \oh_\K) \geq \Ncal(\Ical(L))^n = \ell^n.
\]
Therefore $u^n \oh_\K = \Ical(L)^n$ follows.
\end{proof}

We are now in position to establish our main theorem.

\begin{proof}[Proof of Theorem~\ref{Theorem1_neu}]
a) Given $M = \frac{1}{u} L$, $M'=\frac{1}{u'} L'\in\Delta_{n,\K}^*$ we have
\[
 (uu')^n\oh_\K \subseteq \Ical(LL')^n\subseteq \Ical(L)^n\cdot \Ical(L')^n = u^n\oh_\K \cdot u'^n\oh_\K,
\]
hence $MM'\in\Delta_{n,\K}^*$. The result follows from
\[
 M^{-1} = \begin{pmatrix}
           \overline{D}^{tr} & -\overline{B}^{tr} \\ -\overline{C}^{tr} & \overline{A}^{tr}
          \end{pmatrix}. 
\]
b) At first $M=\frac{1}{u} L = \frac{1}{u'} L'$ with integral $L,L'$ implies
\[
 \tfrac{u'}{u} I = L'L^{-1} \in GL_{2n} (\K),
\]
hence
\[
 \tfrac{u'}{u} \in \K \;\;\text{and}\;\; \tfrac{u'}{u} \oh_{\K} \cdot \Ical(L) = \Ical(L').
\]
Thus $[\Ical(L)]\in \Ccal\ell_\K[n]$ is well-defined by $M$. Hence the map is a homomophism of the groups due to Lemma \ref{Lemma6_neu} and a). If $d_\K\neq -3,-4$ we have
\[
u^n \oh_\K = v^n\oh_\K \;\Leftrightarrow\; u^n= \pm v^n \;\Leftrightarrow\; u=\varepsilon v, \;\; \varepsilon^{2n}=1.
\]
Given $[\Acal]\in\Ical_\K[n]$ we may assume $\Acal\subseteq \oh_\K$. According to \cite{En2}, Satz 2.1, resp. \cite{Rm2} there is a matrix $A\in \oh_\K^{n\times n}$ with elementary divisors $\Acal,\ldots,\Acal$ and $\oh_\K \det A = \Acal^n$, i.e. 
\begin{gather*}\tag{3}\label{gl_3neu_neu}
 A\in \Acal^{n\times n}, \; \oh_\K \det A = \Acal^n, \; u:=\sqrt[n]{\det A} \in\C,\; \ell = u\overline{u} = \Ncal(\Acal).
\end{gather*}
Then we have $AGA^{-1}\in\oh_\K^{n\times n}$ for all $G\in\oh_\K^{n\times n}$ as well as $\frac{1}{\ell} A\overline{A}^{tr} \in SL_n(\oh_\K)$. Thus
\[
 M = \tfrac{1}{u}\begin{pmatrix}
      A & 0 \\ 0 & D
     \end{pmatrix}\in\Delta_{n,\K}^*, \;\; D= \ell \overline{A}^{tr-1}
\]
follows. As this $M$ is mapped onto $[\Acal]$, the map is surjective. \\[0,5ex]
c) Observe that the image consists of at most $h_\K$ elements. Thus 
\[
[\Delta_{n,\K}^*:\Gamma_n(\oh_\K)]< \infty 
\]
follows. Hence $\Delta_{n,\K}^*$ is discrete. Thus it coincides with the normalizer of $\Gamma_n(\oh_\K)$ according to Corollary 
\ref{Corollary1_neu}. The result on the factor group follows from b).
\end{proof}

It is a consequence of the proof that representatives of the $\Gamma_n(\oh_\K)$-cosets in $\Delta_{n,\K}^*$ can be given in the form
\begin{gather*}\tag{4}\label{gl_3neu}
 \begin{pmatrix}
  \overline{A}^{tr} & 0 \\ 0 & A^{-1}
 \end{pmatrix},  \;\;
A=\tfrac{1}{u}L\in SL_n(\C),\;u\in\C,\; L\in\oh_\K^{n\times n},\;\; \Ical(L)^n=\oh_\K \det L .
\end{gather*}

We formulate a special case and include the cases $d_\K = -3,-4$ from Theorem \ref{Theorem1_neu} in 

\begin{corollary}\label{Corollary2_neu} 
Let $\K$ be an imaginary-quadratic number field such that the class number $h_\K$ is coprime to $n$. Then the extended Hermitian modular group is equal to 
\[
 \bigl\{\diag(\varepsilon,\ldots,\varepsilon,\varepsilon \delta,\varepsilon,\ldots,\varepsilon,\varepsilon \delta)M; \; M\in\Gamma_n(\oh_\K),\; \varepsilon\in\C,\; \delta \in \oh_\K^*, \; \varepsilon^{2n} \delta^2 = 1\bigr\}.
\]
\end{corollary}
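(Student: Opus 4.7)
The strategy would be to reduce the corollary to Theorem \ref{Theorem1_neu} by exploiting the hypothesis $\gcd(h_\K, n) = 1$. This immediately forces $\Ccal\ell_\K[n]$ to be trivial, since the order of any class divides both $n$ and $h_\K$. Hence by Theorem \ref{Theorem1_neu}(b) every $M = \frac{1}{u}L \in \Delta_{n,\K}^*$ has $\Ical(L)$ principal, say $\Ical(L) = v\oh_\K$. My first step would be to write $L = v\tilde L$ with $\tilde L \in \oh_\K^{2n\times 2n}$ and set $c := v/u$. Lemma \ref{Lemma6_neu}(a) then gives $|c| = 1$ and $c^n \in \oh_\K^*$, while $M \in SU(n,n;\C)$ forces $\overline{\tilde L}^{tr} J \tilde L = J$ and $\det \tilde L = c^{-2n}$.

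The decisive step, and the only place where Theorem \ref{Theorem1_neu}(b) does not already conclude the argument, is to absorb the unit $\eta := c^n$ into $\tilde L$ via a diagonal correction. This is needed precisely to handle the extra units of $\oh_\K^*$ appearing when $d_\K = -3, -4$. I would set $\delta := \eta^{-1} \in \oh_\K^*$, $D := \diag(1,\ldots,1,\delta,1,\ldots,1,\delta)$, and $M_0 := D^{-1}\tilde L$, and argue that $M_0 \in \Gamma_n(\oh_\K)$. Integrality is clear from $\delta \in \oh_\K^*$; the unitary condition persists because $|\delta| = 1$ (units in an imaginary-quadratic field have absolute value one) implies $\overline{D}^{tr} J D = J$; and a short computation gives $\det M_0 = \delta^{-2}\cdot c^{-2n} = 1$. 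Writing $M = c\tilde L = \varepsilon D M_0$ with $\varepsilon := c$, the constraint $\varepsilon^{2n}\delta^2 = 1$ is automatic.

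For the opposite inclusion I would verify directly that each $\varepsilon D M$ with $\varepsilon^{2n}\delta^2 = 1$, $\delta \in \oh_\K^*$, $M \in \Gamma_n(\oh_\K)$ lies in $\Delta_{n,\K}^*$. The determinant is one by construction, the unitary condition reduces to $|\varepsilon| = 1$ (obtained by taking absolute values in $\varepsilon^{2n} = \delta^{-2}$), and Theorem \ref{Theorem1_neu}(a)'s conditions are met with $u := 1/\varepsilon$ and $L := DM \in \oh_\K^{2n\times 2n}$ of unit determinant $\delta^2$, so $\Ical(L) = \oh_\K$. The main obstacle throughout is the diagonal factorisation in the middle step, where one must genuinely go beyond Theorem \ref{Theorem1_neu}(b) to account for the enlarged unit group $\oh_\K^*$.
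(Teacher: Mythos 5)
Your argument is correct and follows the same route the paper intends: the paper states this corollary as an immediate consequence of Theorem \ref{Theorem1_neu} (triviality of $\Ccal\ell_\K[n]$ when $\gcd(h_\K,n)=1$, plus absorption of the unit $c^n\in\oh_\K^*$ into a diagonal factor to cover $d_\K=-3,-4$), and gives no separate proof. Your write-up simply supplies the details of that deduction, and they check out.
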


If $n=1$ our result reproves the fact that $SL_2(\Z)=\Gamma_1(\oh_\K)= \Delta_{1,\K}^*$ coincides with its normalizer and maximal discrete extension in $SL_2(\R)$.

The same arguments as in \eqref{gl_3neu} can be used in order to obtain

\begin{corollary}\label{Corollary3} 
Let $\K$ be an imaginary-quadratic number field. Then the maximal discrete extension of $SL_n(\oh_\K)$ in $SL_n(\C)$ coincides with the normalizer of $SL_n(\oh_\K)$ in $SL_n(\C)$ and is given by
\[
 \bigl\{\tfrac{1}{u} L\in SL_n(\C);\; L\in \oh_\K^{n\times n},\,0\neq u\in\C,\, \oh_\K \det L = \Ical(L)^n\bigr\}.
\]
\end{corollary}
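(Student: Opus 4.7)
The argument is a direct transcription of the proofs of Lemmas \ref{Lemma2}, \ref{Lemma3}, \ref{Lemma5_neu}, \ref{Lemma6_neu}, Corollary \ref{Corollary1_neu} and Theorem \ref{Theorem1_neu} to the $SL_n(\C)$ setting. Lemma \ref{Lemma1} is no longer needed, and the condition ``$u^n\in\oh_\K$, $u^n\oh_\K = \Ical(L)^n$'' of Theorem \ref{Theorem1_neu} a) collapses to $\det L = u^n\in\oh_\K$ together with $\oh_\K\det L = \Ical(L)^n$, because $M\in SL_n(\C)$ forces $\det L = u^n$ directly. This matches the shape of the representatives in \eqref{gl_3neu}, where the $A$-block alone already carries the entire content of the extension.

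First I would verify that the displayed set $\Delta^*$ is a subgroup containing $SL_n(\oh_\K)$. Closure under multiplication goes through exactly as in Theorem \ref{Theorem1_neu} a). For inversion, $M = \tfrac{1}{u}L$ with $\det M = 1$ gives $\det L = u^n$ and $M^{-1} = \tfrac{1}{u^{n-1}}\operatorname{adj}(L)$; the entries of $\operatorname{adj}(L)$ are $(n-1)$-minors of $L$, so $\Ical(\operatorname{adj}(L))\subseteq\Ical(L)^{n-1}$, while the identity $L\,\operatorname{adj}(L) = u^n I$ raised to the $n$-th power yields the reverse inclusion, giving $\Ical(\operatorname{adj}(L))^n = u^{n(n-1)}\oh_\K$.

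Next, the $SL_n$-analogs of Lemmas \ref{Lemma2}, \ref{Lemma3}, \ref{Lemma5_neu} and Corollary \ref{Corollary1_neu} carry over with only notational changes. Discreteness of $\Delta\supseteq SL_n(\oh_\K)$ forces $[\Delta:SL_n(\oh_\K)]<\infty$ since $SL_n(\oh_\K)$ is a lattice in $SL_n(\C)$; conjugating the elementary matrices $I + h E_{ij}$ ($h\in\oh_\K$, $i\ne j$) by $M\in\Delta$ produces $u\in\C\backslash\{0\}$ with $uM\in\oh_\K^{n\times n}$, and $|u|^2\in\N$ as before. Hermite normal form over the Dedekind domain $\oh_\K$ (the $SL_n$-case of Ensenbach's theorem) then provides an upper triangular representative modulo $SL_n(\oh_\K)$. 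The coset-counting argument of Lemma \ref{Lemma5_neu}, applied to the diagonal entries of the triangularized $M^r$, shows that any discrete extension lying in $SL_n(\K)$ equals $SL_n(\oh_\K)$, so every discrete extension of $SL_n(\oh_\K)$ normalizes it.

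Finally, the Lemma \ref{Lemma6_neu} computation applied to the triangular representative gives $\oh_\K\det L = \Ical(L)^n$ for any $\tfrac{1}{u}L$ in the normalizer, embedding the normalizer into $\Delta^*$. Conversely, the map $\Delta^*\to\Ccal\ell_\K[n]$, $\tfrac{1}{u}L\mapsto[\Ical(L)]$, is a well-defined group homomorphism (surjectivity by the elementary-divisor construction \eqref{gl_3neu_neu}); its image is finite, so $[\Delta^*:SL_n(\oh_\K)]<\infty$, which makes $\Delta^*$ discrete and hence equal to both the normalizer and the maximal discrete extension. The only real technicality is the Hermite-normal-form analog of Lemma \ref{Lemma3} over $\oh_\K$, but this is classical for matrices over a Dedekind domain and presents no obstacle.
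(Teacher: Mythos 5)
Your overall strategy---transcribing the chain Lemma \ref{Lemma2} $\to$ Lemma \ref{Lemma3} $\to$ Lemma \ref{Lemma5_neu} $\to$ Corollary \ref{Corollary1_neu} $\to$ Lemma \ref{Lemma6_neu} $\to$ Theorem \ref{Theorem1_neu} to the $SL_n$ setting---is exactly what the paper intends, and several of your steps (the subgroup verification via the adjugate, the elementary matrices $I+hE_{ij}$ replacing the Hermitian translations) are correct and well chosen. But the one step you wave off as ``classical \dots\ presents no obstacle'' is precisely where the transcription breaks. There is no Hermite normal form over $\oh_\K$ realized by left multiplication with $SL_n(\oh_\K)$ unless $\oh_\K$ is a principal ideal domain: reducing a column $(a_1,\dots,a_n)^{tr}$ to $(g,0,\dots,0)^{tr}$ by a matrix in $SL_n(\oh_\K)$ forces $\oh_\K a_1+\dots+\oh_\K a_n=g\oh_\K$ to be principal, and the interesting elements of the extension---those mapping to a nontrivial class in $\Ccal\ell_\K[n]$---are built exactly from matrices whose content ideal $\Ical(L)$ is non-principal. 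Over a Dedekind domain the classical statement only produces a pseudo-Hermite form with nontrivial coefficient ideals; the paper's Lemma \ref{Lemma3} obtains a genuine block-triangular form from Braun's theory of Hermitian pairs, which exploits the $2n\times 2n$ unitary structure and has no $SL_n$ counterpart. The same problem infects ``the Lemma \ref{Lemma6_neu} computation applied to the triangular representative'': besides the missing triangular form, that computation rests on the unitary identities $A\overline{D}^{tr}=\ell I$ and $x\overline{y}\in\ell\oh_\K$, which do not exist in $SL_n(\C)$. (A smaller symptom of the same issue: $|u|^2\in\N$ does not follow ``as before''; one only gets $u^n=\det L\in\oh_\K$, hence $|u|^{2n}\in\N$.)

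The gap is repairable, and your own adjugate idea is the right tool. For the analogue of Lemma \ref{Lemma5_neu} no triangularization is needed: work directly with $M\in SL_n(\K)$, normalize $e(a_{11},\wp)<e(a_{1j},\wp),\,e(a_{j1},\wp)$ for $j\geq 2$ by elementary matrices, and observe that the $(1,1)$ entry of $M^r$ then has valuation $r\,e(a_{11},\wp)$ because every competing summand has strictly larger valuation; the cosets $SL_n(\oh_\K)M^r$ are therefore distinct. For the inclusion of the normalizer into the displayed set, note that $ME_{ij}M^{-1}$ is integral for all $i,j$ (for $i\neq j$ from $M(I+E_{ij})M^{-1}-I$, and for $i=j$ from $E_{ii}=E_{ij}E_{ji}$); writing $M^{-1}=u^{-(n-1)}\operatorname{adj}(L)$ this gives $\Ical(L)\cdot\Ical(\operatorname{adj}(L))\subseteq (\det L)\,\oh_\K$, and since $(\det L)^{n-1}=\det\operatorname{adj}(L)\in\Ical(\operatorname{adj}(L))^n$, raising to the $n$-th power yields $\Ical(L)^n\subseteq(\det L)\,\oh_\K$; the reverse inclusion is trivial. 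With these two substitutions your outline becomes a complete proof along the lines the paper indicates.
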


The explicit description of this group in the case $n=2$ is contained in \cite{KRW}.

\begin{remark}\label{Remark1} 
a) The result for $\SU(n,n;\K)$ differs from the corresponding result for $Sp(n;\K)$ in \cite{Ra}, Theorem 2, as the index $[\Delta_{n,\K}^*:\Gamma_n(\oh_\K)]$ is not independent of $n$. Moreover the factor group does not only contain elements of order $1$ or $2$, if $n> 2$. \\
b) The entries of the coset
 \[
  \begin{pmatrix}
   \overline{A}^{tr} & 0 \\ 0 & A^{-1}
  \end{pmatrix} \Gamma_n(\oh_\K)
 \]
in \eqref{gl_3neu} are algebraic integers of the algebraic number field $\Q(\sqrt{-m},u)$ of degree $\leq 2n$, because $(a/u)^n\in \oh_\K$ for any entry $a$ of $A$. \\
c) It follows from \eqref{gl_3neu} that 
\[
 M^n \in \Gamma_n(\oh_\K) \;\; \text{for all}\;\; M\in \Delta_{n,\K}^*.
\]
\end{remark}

\section{Orthongonal groups}

We have a closer look at the Hermitian modular group for $n=2$. We also revisit a family of isomorphisms between~$\SU(2,2;\C) \slash \{\pm I\}$ and the connected component~$\SO_0(2,4;\R)$ of the special orthogonal group~$\SO(2,4;\R)$. We determine isomorphisms that are compatible with the discrete subgrous~$\Gamma_n(\oh_\K)$ and thus allow us to shed a different light on our main theorem.

Let $d\in\N$ be a squarefree divisor of $d_\K$. Then
\[
 \Acal_d = \oh_\K d + \oh_\K(m+\sqrt{-m})\subseteq \oh_\K
\]
is the unique ideal of reduced norm $d$ in $\oh_\K$. If $e |d_\K$ is squarefree and coprime to $d$ the prime ideal decomposition 
yields 
\[
 \Acal_d^2 = d\oh_\K, \quad \Acal_d \cdot \Acal_e = \Acal_{de}.
\]
Following \cite{KRW} we determine $u,v\in\Z$ such that
\[
 ud-v(m^2+m)/d = 1,
\]
i.e.
\begin{gather*}\tag{5}\label{gl_4}
 V_d:= \frac{1}{\sqrt{d}} \left(\begin{smallmatrix}
                           ud & v(m+\sqrt{-m}) \\ m-\sqrt{-m} & d
                          \end{smallmatrix}\right)
 \in SL_2(\C), \;\; W_d:= \left(\begin{smallmatrix}
                           \overline{V}_d^{tr} & 0  \\  0 & V_d^{-1}
                          \end{smallmatrix}\right) \in \SU(2,2;\C).
\end{gather*}
Then we get
\[
 \Gamma_2(\oh_\K) W_d = W_d \Gamma_2(\oh_\K) = \biggl\{\frac{1}{\sqrt{d}} M\in \SU(2,2;\C);\;M\in\Acal_d^{4\times 4}\biggr\}.
\]

\begin{theorem}\label{Theorem2}
 For an imaginary-quadratic number field $\K$, the extended Hermitian modular group of degree $2$ is given by
 \[
  \Delta_{2,\K}^* = \bigcup_{d|d_\K,d \;\square-\text{free}} \Gamma_2(\oh_\K) W_d.
 \]
It contains $\Gamma_2(\oh_\K)$ as a normal subgroup of index $2^{\nu}$, 
\[
\nu:=\sharp\{p;p\;\text{prime},\; p | d_\K\},
\]
and the factor group satisfies 
\[
 \Delta_{2,\K}^*/\Gamma_2(\oh_\K) \cong \Ccal_2^{\nu}.
\]

\end{theorem}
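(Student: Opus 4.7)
The plan is to combine Theorem \ref{Theorem1_neu} with the explicit description of the ambiguous ideals $\Acal_d$, and then realise the matrices $W_d$ as a complete set of coset representatives. First I would verify $W_d\in\Delta_{2,\K}^*$ via Theorem \ref{Theorem1_neu}a by taking $u=\sqrt d$ and $L=\sqrt d\,W_d$: direct inspection shows the entries of $\sqrt d\,V_d$ lie in $\Acal_d$ (using $m-\sqrt{-m}=\overline{m+\sqrt{-m}}\in\overline{\Acal_d}=\Acal_d$ since $d\mid d_\K$ is ramified) and generate it, so $\Ical(L)=\Acal_d$, and then $u^2\oh_\K = d\oh_\K = \Acal_d^2 = \Ical(L)^2$ from the ideal identities stated before the theorem.

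Next I would apply the homomorphism $\pi\colon\Delta_{2,\K}^*\to\Ccal\ell_\K[2]$ of Theorem \ref{Theorem1_neu}b, under which $\pi(W_d)=[\Acal_d]$. The key arithmetic input is Gauss's genus theory for imaginary-quadratic fields: $\Ccal\ell_\K[2]$ has order $2^{\nu-1}$ and is generated by the ambiguous classes $[\Acal_p]$ for primes $p\mid d_\K$, subject to the single relation $\prod_{p\mid d_\K}[\Acal_p]=[\oh_\K]$ coming from the principal ideal whose square is $|d_\K|\oh_\K$. Consequently the map $d\mapsto[\Acal_d]$ on the $2^\nu$ squarefree divisors of $d_\K$ surjects onto $\Ccal\ell_\K[2]$ with all fibres of size two. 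Combining this with Theorem \ref{Theorem1_neu}c for $d_\K\neq-3,-4$ and with Corollary \ref{Corollary2_neu} in the remaining two cases shows that the induced map $\Delta_{2,\K}^*/\Gamma_2(\oh_\K)\to\Ccal\ell_\K[2]$ has kernel of order exactly two in all cases, giving $[\Delta_{2,\K}^*:\Gamma_2(\oh_\K)] = 2\cdot 2^{\nu-1} = 2^\nu$.

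Since the $2^\nu$ cosets $\Gamma_2(\oh_\K)W_d$ are mapped by $\pi$ onto the whole of $\Ccal\ell_\K[2]$, a pigeonhole/surjectivity argument together with the index computation forces them to be pairwise distinct and to exhaust the quotient. Normality of $\Gamma_2(\oh_\K)$ inside $\Delta_{2,\K}^*$ is already guaranteed by Corollary \ref{Corollary1_neu}. For the factor group, the identity $\Acal_d^2 = d\oh_\K$ translates into the matrix statement that the entries of $d\,V_d^2 = (\sqrt d\,V_d)^2$ lie in $\Acal_d^2 = d\oh_\K$, so $V_d^2\in SL_2(\oh_\K)$ and therefore $W_d^2\in\Gamma_2(\oh_\K)$. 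A finite group of order $2^\nu$ whose exponent divides $2$ is elementary abelian, yielding $\Delta_{2,\K}^*/\Gamma_2(\oh_\K)\cong\Ccal_2^\nu$.

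The main obstacle is not matrix manipulation but the number-theoretic input: invoking Gauss's genus theory to pin down both $|\Ccal\ell_\K[2]|=2^{\nu-1}$ and the explicit relation $\prod_{p\mid d_\K}[\Acal_p]=[\oh_\K]$, and patching Theorem \ref{Theorem1_neu}c to see that the scalar kernel of $\pi$ retains order two in the exceptional cases $d_\K=-3,-4$ whose unit groups are larger. Once these ingredients are in place, the remaining verifications with $\Acal_d$, $V_d$ and $W_d$ are a routine unwinding of the definitions.
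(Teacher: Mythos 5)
Your strategy --- verifying $W_d\in\Delta_{2,\K}^*$, pushing the $W_d$ through the homomorphism $\pi$ of Theorem \ref{Theorem1_neu}\,b), and comparing against the index $2\cdot|\Ccal\ell_\K[2]|=2^\nu$ --- is viable, but the step where you assert that ``a pigeonhole/surjectivity argument together with the index computation forces them to be pairwise distinct and to exhaust the quotient'' does not go through as stated. The map $d\mapsto[\Acal_d]$ is two-to-one onto $\Ccal\ell_\K[2]$, so surjectivity only guarantees $2^{\nu-1}$ distinct cosets among the $\Gamma_2(\oh_\K)W_d$; to reach $2^\nu$ you must show separately that the two divisors in each fibre of $d\mapsto[\Acal_d]$ yield \emph{different} cosets, equivalently that $W_f\notin\Gamma_2(\oh_\K)$ for every squarefree $f\,|\,d_\K$ with $f>1$ and $\Acal_f$ principal (such $f$ always exist, e.g.\ $f=m$, resp.\ $f=2$ when $d_\K=-4$). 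Nothing in the counting forces this: a priori $\bigcup_d\Gamma_2(\oh_\K)W_d$ could be a proper subgroup of index $2^{\nu-1}$ over $\Gamma_2(\oh_\K)$, with both divisors in each fibre landing in the same coset. The repair is short: if $\Gamma_2(\oh_\K)W_d=\Gamma_2(\oh_\K)W_e$ then $W_dW_e^{-1}\in\oh_\K^{4\times 4}$, but its entries lie in $\tfrac{1}{\sqrt{f}}\Acal_f$ with $f=de/\gcd(d,e)^2$, and since $\sqrt{f}\notin\K$ for squarefree $f>1$ this forces all entries into $\{0\}$ unless $f=1$, i.e.\ $d=e$. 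This is a needed ingredient, not a consequence of pigeonhole, and it is exactly the point your sketch skips.

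It is also worth noting that the paper's direct proof runs in the opposite direction and thereby avoids the issue entirely: starting from an arbitrary $M=\tfrac{1}{u}L\in\Delta_{2,\K}^*$ in the triangular form of Lemma \ref{Lemma3}, it normalizes (multiplying by $\ell+\overline{\det A^*}$ and cancelling integers) to achieve $u=\sqrt{\ell}$ with $L$ primitive, and then reads off from $\ell\oh_\K=\Ical(L)^2$ that $\ell$ is a squarefree divisor of $d_\K$, whence $\Ical(L)=\Acal_\ell$ and $M\in\Gamma_2(\oh_\K)W_\ell$ by the coset description preceding the theorem. Exhaustion is thus immediate and requires no input from genus theory; indeed the paper then \emph{derives} Hecke's result $\Ccal\ell_\K[2]\cong\Ccal_2^{\nu-1}$ as a corollary (Remark \ref{Remark2}\,b)), whereas your argument consumes it as a hypothesis. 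Using genus theory as input is legitimate, but you lose that byproduct and you still owe the distinctness argument above; your verifications of $W_d\in\Delta_{2,\K}^*$, of $W_d^2\in\Gamma_2(\oh_\K)$, and the elementary-abelian conclusion are otherwise sound.
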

%
\begin{proof}
The result follows from Theorem \ref{Theorem1_neu}, Corollary \ref{Corollary2_neu} and \cite{KRW}. But we also give a direct proof. Let
\[
 M=\tfrac{1}{u} L = \tfrac{1}{u} \begin{pmatrix}
                                  A^* & B^* \\ 0 & D^*
                                 \end{pmatrix}
\in \Delta_{2,\K}^*
\]
according to Theorem \ref{Theorem1_neu} and Lemma \ref{Lemma3}. Then $\Ncal(A^*) = \ell = u\overline{u} \in\N$ and $|\det A^*|^2 = \ell^2$. If we consider $(\ell + \overline{\det A^*})M$ instead of $M$, if necessary, we may assume $\det A^* \in\N$. Cancelling integers we may even assume $\frac{1}{r} A^*\not\in \oh_\K^{2\times 2}$ for $r>1$, hence
\[
 M= \frac{1}{\sqrt{\ell}}\, L.
\]
In view of $\ell \oh_\K = \Ical(L)^2$ due to Theorem \ref{Theorem1_neu}, we conclude that $\ell$ is a squarefree divisor of $d_\K$ from the prime ideal decomposition in $\oh_\K$.
\end{proof}

We consider particular examples in 

\begin{remark}\label{Remark2}
a) One has
\[
 iI \in \Gamma_2(\oh_\K) W_m,
\]
hence
\[
 \Delta_{2,\K}^* = \Gamma_2(\oh_\K) \cup (iI)\Gamma_2(\oh_\K),
\]
if $d_\K\neq -4$ is a prime discriminant. If $d_\K = -4$ one has
\[
 \left(\tfrac{1+i}{\sqrt{2}}\right) \begin{pmatrix}
                                    U & 0 \\ 0 & U
                                   \end{pmatrix}
\in\Gamma_2(\oh_\K)W_{2}, \;\; U = \begin{pmatrix}
                                     1 & 0 \\ 0 & i
                                    \end{pmatrix},
\]
and therefore 
\[
 \Delta_{2,\K}^* = \Gamma_2(\oh_\K) \cup\left(\tfrac{1+i}{\sqrt{2}}\right) \begin{pmatrix}
                                                             U & 0 \\ 0 & U
                                                            \end{pmatrix}
\Gamma_2(\oh_\K). 
\]
This group is isomorphic to $U(2,2;\oh_\K)$ and already appeared, when the attached graded ring of Hermitian modular forms was determined (cf. \cite{F2}).  \\[0.5ex]
b) Theorem \ref{Theorem1_neu} and Theorem \ref{Theorem2} reprove a result of Hecke that 
\[
 \Ccal\ell_\K [2] \cong \Ccal_2^{\nu-1}.
\]
c) If $\Ccal\ell_\K = \Ccal\ell_\K [2]$ we conclude that
\[
 \Delta_{n,\K}^* = 
 \begin{cases}
\bigcup\limits_{\varepsilon\in\C, \,\varepsilon^{2n} = 1} \varepsilon I\cdot \Gamma_2(\oh_\K), & \text{if}\;n\;\text{is odd}, \\
 \bigcup\limits_{\substack{\varepsilon\in\C,\, \varepsilon^{2n} = 1\\ d|d_\K,\,d\;\square-\text{free}}} \varepsilon I\cdot\widehat{W}_d  \Gamma_n(\oh_\K), & \text{if}\;n\;\text{is even},                  
\end{cases}
\]
where 
\[
 \widehat{W}_d = \diag (\overline{V}_d^{tr},\ldots,\overline{V}_d^{tr}, V_d^{-1},\ldots,V_d^{-1}) \in \SU(n,n;\C).
\]
\end{remark}
\vspace{1.5ex}

Moreover there is another group involved in this context, namely
\[
 \Gcal_\K:=\biggl\{\frac{1}{\sqrt{k}} M \in \SU(2,2;\C);\;k\in \N,\,M\in \oh_\K^{4\times 4}\biggr\}\supseteq \SU(2,2;\K),
\]
which naturally appears in the associated Hecke theory (cf. \cite{En}, \cite{Rm}). \\[-0,5ex]

Now we consider the orthogonal setting. Let
\[
 S=\begin{pmatrix}
    2 & 2\re(\omega_\K) \\ 2 \re(\omega_\K) & 2 \, \omega_\K\overline{\omega}_\K
   \end{pmatrix}, \;
S_0 = \begin{pmatrix}
    0 & 0 & 1 \\ 0 & -S & 0 \\ 1 & 0 & 0
   \end{pmatrix}, \;
S_1 = \begin{pmatrix}
    0 & 0 & 1 \\ 0 & S_0 & 0 \\ 1 & 0 & 0
   \end{pmatrix}.
\]
Then $S_1$ is of signature $(2,4)$. We define the attached special orthogonal group by
\[
 \SO(S_1;\R):=\{M\in SL_6(\R);\;S_1[M]:= M_1^{tr} S_1 M = S_1\}
\]
and denote by $\SO_0(S_1;\R)$ the connected component of the identity matrix $I$. Let
\[
 \Sigma_\K:=\{M\in \SO_0(S_1;\R);\;M\in I+\Z^{6\times 6} S_1\}
\]
stand for \textit{discriminant kernel} and $\Sigma_\K^*:= \SO_0(S_1;\Z)$ for the full group of integral matrices in $\SO_0(S_1;\R)$.

The group $\SO_0(S_1;\R)$ acts on the orthogonal half-space (cf. \cite{G4})
\[
 \Hcal_\K:=\{z=x+iy=(\tau_1,z,w,\tau_2)^{tr} \in \C^4,\, \im \tau_1>0,\, y^{tr}S_0 y > 0\}
\]
via 
\[
 z\mapsto \widetilde{M}\langle z\rangle:=\frac{1}{\widetilde{M}\{z\}}\left(-\tfrac{1}{2} S_0[z]b+Kz+c\right),
\]
where
\[
 \widetilde{M} = \begin{pmatrix}
                  \alpha & a^{tr} S_0 & \beta \\ b & K & c \\ \gamma & d^{tr} S_0 & \delta
                 \end{pmatrix}, \;
\widetilde{M}\{z\} = -\tfrac{\gamma}{2} S_0 [z] + d^{tr} S_0 z+\delta.
\]
We consider the bijection between the (complexified) Hermitian $2\times 2$ matrices and $\R^4$ (resp. $\C^4$)
\[
 \varphi:\begin{pmatrix}
       \alpha & \beta+\gamma\omega_\K \\ \beta+\gamma \overline{\omega}_\K & \delta
      \end{pmatrix}
\mapsto (\alpha, \beta, \gamma, \delta)^{tr},
\]
which satisfies
\[
 \varphi(\H_2) = \Hcal_\K.
\]
There is an isomorphism between $\SU(2,2;\C)/\{\pm I\}$ and $\SO_0(S_1;\R)$, where $\pm M\mapsto \widetilde{M}$, given by
\begin{gather*}
 \varphi(M\langle Z\rangle) = \widetilde{M}\langle \varphi (Z)\rangle, \;\det (CZ+D) = \widetilde{M}\{\varphi(Z)\}\;\,\text{for all}\;\, Z\in\H_2. \tag{6}\label{gl_5} 
\end{gather*}
Just as in \cite{GKr} we obtain an explicit version, if we use the abbreviation
\[
 \begin{pmatrix}
  \alpha & \beta \\ \gamma & \delta
 \end{pmatrix}^{\sharp} 
= \begin{pmatrix}
  \delta & -\beta \\ -\gamma & \alpha
 \end{pmatrix}
\]
for the adjoint matrix, via $\phi(Z) = z$
\[
 \det(CZ+D) = \widetilde{M}\{z\}, \; (AZ+B)(CZ+D)^{\sharp} = -\tfrac{1}{2} S_0[z]b + Kz+c,
\]
where
\begin{gather*}
\left\{\;\;\begin{matrix}
 \alpha = \det A,\; \beta=-\det B, \; \gamma=-\det C, \; \delta = \det D ,\\
 a= -\varphi(A^{\sharp} B), \; b= -\varphi(AC^{\sharp}), \; c= \varphi(BD^{\sharp}), \; d= \varphi(C^{\sharp}D), \tag{7}\label{gl_6} \\ 
 Kz=\varphi(AZD^{\sharp} + BZ^{\sharp}C^{\sharp}).
 \end{matrix}\right.
\end{gather*}

\begin{theorem}\label{Theorem3_neu}
 Let $\K$ be an imaginary-quadratic number field. Then the map
 \[
  \phi: \SU(2,2;\C) \to \SO_0(S_1;\R), \quad M\mapsto \widetilde{M},
 \]
given by \eqref{gl_5} and \eqref{gl_6} is a surjective homomorphism of the groups with kernel $\{\pm I\}$. It satisfies
\begin{align*}
 \phi(\Gcal_\K) & = \SO_0(S_1;\Q),  \\
 \phi(\Delta_{2,\K}^*) & = \Sigma_\K^*,  \\
 \phi(\Gamma_2(\oh_\K)) & = \Sigma_\K.
\end{align*}
\end{theorem}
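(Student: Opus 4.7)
The plan is to first establish $\phi$ as a surjective homomorphism with kernel $\{\pm I\}$, and then to treat the three subgroup identifications by combining the explicit formulas \eqref{gl_6} with Theorems \ref{Theorem1_neu} and \ref{Theorem2}. The homomorphism property follows from \eqref{gl_5}: $\widetilde{M}$ realises the action of $M\in SU(2,2;\C)$ on $\varphi(\H_2)=\Hcal_\K$ and $\widetilde{M}\{z\}$ equals the multiplicative cocycle $\det(CZ+D)$, so both $\widetilde{M_1M_2}$ and $\widetilde{M_1}\widetilde{M_2}$ induce the same fractional-linear action on $\Hcal_\K$ and the same automorphy factor, forcing equality. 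For the kernel, $\widetilde{M}=I$ implies $M\langle Z\rangle=Z$ on $\H_2$, so $M=\lambda I$ with $\lambda^4=\det M=1$, and $\lambda^2=\widetilde{\lambda I}\{z\}=1$ then forces $\lambda=\pm 1$. Surjectivity follows because $SU(2,2;\C)/\{\pm I\}$ and $SO_0(S_1;\R)$ are connected real Lie groups of the same dimension $15$, hence $\phi$ is an open map whose image is an open subgroup of the connected target.

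For $\phi(\Gcal_\K)=SO_0(S_1;\Q)$, the inclusion $\subseteq$ is immediate from \eqref{gl_6}: if $M=\tfrac{1}{\sqrt{k}}L$ with $L\in\oh_\K^{4\times 4}$, then each scalar entry of $\widetilde{M}$ equals $\tfrac{1}{k}$ times an integer polynomial in the entries of $L$, and the vectors $a,b,c,d$ arise from the Hermitian matrices $A^\sharp B$, $AC^\sharp$, $BD^\sharp$, $C^\sharp D$ over $\tfrac{1}{k}\oh_\K$, hence lie in $\Q^4$. For the reverse inclusion I would take $\widetilde{N}\in SO_0(S_1;\Q)$, pick any preimage $M\in SU(2,2;\C)$, and use that $\alpha,\beta,\gamma,\delta\in\Q$ together with $\varphi^{-1}(a),\ldots,\varphi^{-1}(d)\in\K^{2\times 2}$ determine the Hermitian products $A^\sharp B$, etc., in $\K^{2\times 2}$; combining these with $A^\sharp A=(\det A)\,I$ and the defining relations of $SU(2,2;\C)$ recovers the blocks $A,B,C,D$ up to one common complex scalar, producing a normalisation $\sqrt{k}\,M\in\oh_\K^{4\times 4}$.

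The inclusions $\phi(\Gamma_2(\oh_\K))\subseteq\Sigma_\K$ and $\phi(\Delta_{2,\K}^*)\subseteq\Sigma_\K^*$ I would verify on generators: for $\Gamma_2(\oh_\K)$ on the standard Jacobi-type generators (translations by integral Hermitian matrices, block-diagonal unimodular transformations, and the involution $J$), and for $\Delta_{2,\K}^*$ Theorem \ref{Theorem2} adds only the matrices $W_d$ with $d\mid d_\K$ squarefree. In each case \eqref{gl_6} produces integer entries, and the congruence $\widetilde{M}\equiv I \pmod{\Z^{6\times 6}S_1}$ for the generators of $\Gamma_2(\oh_\K)$ is checked individually. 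Conversely, $\phi^{-1}(\Sigma_\K^*)$ is a discrete subgroup of $SU(2,2;\C)$ containing $\Gamma_2(\oh_\K)$, so by Corollary \ref{Corollary1_neu} it lies in $\Delta_{2,\K}^*$; an index count using Theorem \ref{Theorem2} and the standard equality $[\Sigma_\K^*:\Sigma_\K]=2^\nu$ then forces equality at both levels. The main obstacle I anticipate is the reverse inclusion for $\Gcal_\K$: the formulas \eqref{gl_6} are bilinear in $(A,B,C,D)$, so isolating those blocks from the fifteen components of $\widetilde{M}$ requires a careful choice of common denominator and handling of the sign inherent in the $2{:}1$ covering, while the discriminant-kernel verification for generators of $\Gamma_2(\oh_\K)$, though routine, is notationally heavy.
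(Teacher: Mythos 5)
Your overall architecture (cocycle argument for the homomorphism property, open-mapping argument for surjectivity, generator checks for the forward inclusions, and the maximality of $\Delta_{2,\K}^*$ via Corollary \ref{Corollary1_neu} for the reverse inclusion $\Sigma_\K^*\subseteq\phi(\Delta_{2,\K}^*)$) is sound and partly more conceptual than the paper's. But two steps that carry the real weight of the theorem are left unproved. First, the reverse inclusion $SO_0(S_1;\Q)\subseteq\phi(\Gcal_\K)$: you propose to reconstruct $A,B,C,D$ up to a common scalar from the products $A^\sharp B$, $AC^\sharp$, $BD^\sharp$, $C^\sharp D$, the four determinants and $K$, and you yourself flag this as the main obstacle without resolving it. This is not a routine computation — the data of \eqref{gl_6} is quadratic in the blocks, several blocks may be singular, and one must show that the recovered scalar can be normalised to $\sqrt{k}$ with $k\in\N$. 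The paper avoids this entirely by quoting from \cite{K1} a generator system for $SO_0(S_1;\Q)$ (and likewise for $\Sigma_\K$ and $\Sigma_\K^*$), namely $\widetilde{J}$, the Eichler transvections and the block matrices with $K\in SO_0(1,3)$, and exhibiting an explicit preimage of each generator; without either that generator system or a completed reconstruction argument, the equality $\phi(\Gcal_\K)=SO_0(S_1;\Q)$ is not established.

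Second, your identification of $\phi(\Gamma_2(\oh_\K))$ with $\Sigma_\K$ rests on the ``standard equality'' $[\Sigma_\K^*:\Sigma_\K]=2^\nu$. In this paper that index is a \emph{consequence} of Theorem \ref{Theorem3_neu} combined with Theorem \ref{Theorem2}, not an input: a priori one only gets an embedding of $\Sigma_\K^*/\Sigma_\K$ into the orthogonal group of the discriminant module of $S_1$, which bounds the index from above, and proving that the bound $2^\nu$ is attained amounts to exhibiting the $2^\nu$ distinct classes — i.e.\ essentially the images of the $W_d$. As written, your index count is circular. If you replace the appeal to the ``standard equality'' by (i) the upper bound $[\Sigma_\K^*:\Sigma_\K]\leq 2^\nu$ from the discriminant form together with (ii) the verification that the $\phi(W_d)$ represent $2^\nu$ distinct cosets of $\Sigma_\K$ in $\Sigma_\K^*$, the argument for the last two equalities closes; the $\Gcal_\K$ case still needs the generator system or a complete reconstruction.
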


\begin{proof}
 The groups on the left hand side are generated by the matrices
 \[
  J,\; \begin{pmatrix}
        I & H \\ 0 & I
       \end{pmatrix}, \; H = \overline{H}^{tr}, \; 
\begin{pmatrix}
        \overline{U}^{tr} & 0 \\ 0 & U^{-1}
       \end{pmatrix}, \; \det U\in \R\backslash\{0\},
 \]
with coefficients in the appropriate set. This is clear for $\SU(2,2;\C)$ (cf. \cite{K3}), with a slight adaption also for $\Gcal_\K$, for $\Gamma_2(\oh_\K)$ due to \cite{Kl}, Theorem 3, and for $\Delta_{2,\K}^*$ by virtue of Theorem \ref{Theorem2}. Hence the image is contained in the right hand side in any case. \\
On the other hand it follows from \cite{K1} that the groups on the right hand side are generated by matrices of the type
\begin{align*}
 & \widetilde{J} = \begin{pmatrix}
 0 & 0 & P  \\ 0 & I & 0 \\ P & 0 & 0
 \end{pmatrix},\;
 P = \begin{pmatrix}
  0 & -1 \\ -1 & 0
 \end{pmatrix}, \;
 \begin{pmatrix}
 1 & \lambda^{tr}S_0 & -\tfrac{1}{2}S_0[\lambda]  \\ 0 & I & \lambda \\ 0 & 0 & 1
 \end{pmatrix},\;
\begin{pmatrix}
 1 & 0 & 0  \\ 0 & K & 0 \\ 0 & 0 & 1
 \end{pmatrix},\\[1ex]
 & K\in \SO_0(1,3).
\end{align*}
They appear as images of elements in the groups on the left hand side due to \eqref{gl_6}, if one invokes \cite{KRW} for the last type of matrices. As the kernel of $\phi$ is clearly $\{\pm I\}$, the claim follows.
\end{proof}

\begin{remark}\label{Remark3} 
a) By Theorem \ref{Theorem1_neu} and \ref{Theorem3_neu} clearly $\Sigma_\K^*$ is the normalizer and the maximal discrete extension of $\Sigma_\K$ in $\SO_0(S_1;\R)$.  \\
b) The result on the normalizer for $n=2$ is contained in \cite{We} with a completely different proof. Applications to Hermitian modular forms are described in \cite{We2}. It is clear from \eqref{gl_3neu} that the Siegel-Eisenstein series for $\Gamma_n(\oh_\K)$ (cf. \cite{B}) is a modular form with respect to the extended Hermitian modular group $\Delta_{n,\K}^*$.\\
c) Köhler \cite{Koe2} showed that the paramodular group of degree $2$ and squarefree level $N$ can be embedded into the Hermitian modular group $\Gamma_2(\oh_\K)$, if $N=u\overline{u}$ for some $u\in\oh_\K$. The normalizer of the paramodular group (cf.~\cite{Koe}) can be embedded into $\Delta_{2,\K}^*$, whenever $m=N$.\\
d) Theorem \ref{Theorem3_neu} illustrates that the exceptional isogeny between the real Lie groups $\SU(2,2;\C)$ and $\SO_0(S_1;\R)$ in general does not decend to an isogeny of algebraic groups (cf. \cite{Gar}). Indeed, when constructing the underlying isomorphism of the special unitary and the spin group, one typically employs a diagonal quadratic from of signature $(2,4)$ as opposed to the one that is associated with $S_1$.
\end{remark}

\section{Field extensions associated with discrete groups}

There is no analogue of the isomorphism between $\SU(2,2;\C) \slash \{ \pm I \}$ and $\SO_{0}(S_1;\R)$ for general~$n$,  which we utilized when inspecting the special case~$n=2$. Let~$\mu \subseteq \C$ be the set of~$2n$-th roots of unity. Theorem~1 connects the quotient group~$\Delta_{n,\mathbb{K}}^\ast \slash \mu \Gamma_n(\oh_{\mathbb{K}})$ with the $n$-torsion subgroup of the ideal class group of~$\mathbb{K}$ if $d_\K\neq -3,-4$. Class groups of imaginary-quadratic number fields including their~$n$-torsion are elusive, only conjecturally governed by the Cohen-Lehnstra heuristic~\cite{CoLen} and its recent global refinement~\cite{HoJoKuMcLPe}. For this reason a detailed description of~$\Delta_{n,\mathbb{K}}^\ast$ for general~$n$ is out of reach. We next narrow down the field of definition of~$\Delta_{n,\mathbb{K}}^\ast$ and provide an algorithm to compute representatives of~$\Delta_{n,\mathbb{K}}^\ast \slash \mu \Gamma_n(\oh_{\mathbb{K}})$ for given~$\mathbb{K}$ and~$n$.

The field of definition of~$\Delta_{n,\mathbb{K}}^\ast$ is
\begin{gather*}\tag{8}\label{gl_7}
 \widehat{\K}_n =  \widehat{\K}\big( \Delta_{n,\mathbb{K}}^\ast \big)
\;:=\;
  \Q\big(
  m_{i,j} \,;\,
  M = (m_{ij})\in \Delta_{n,\mathbb{K}}^\ast,\,
  1 \leqq i,j \leq 2 n
  \big)
\subseteq \C .
\end{gather*}
Clearly Remark \ref{Remark1}\,b) yields
\[
 \Delta_{n,\K}^* \subseteq \oh_{\widehat{\K}_n}^{2n\times 2n}.
\]
Obviously $\K\subseteq \widehat{\K}_n$ holds for $n\geq 2$. We quote some examples from our considerations above. Therefore let $\zeta_r = e^{2\pi i/r}$ stand for a primitive $r$-th root of unity.

\begin{exmps}
a) Clearly $\widehat{\K}_1 = \Q$ holds.  \\
b) If $n= 2$ Theorem \ref{Theorem2} yields
\[
 \widehat{\K}_2 = \Q(\zeta_4, \sqrt{p};\;p\;\text{prime},\,p|d_\K).
\]
c) One has for $n\geq 2$
\[
 \widehat{\K}_n = 
\begin{cases}
 \K[\zeta_{4n}], & \text{if}\;d_\K = -4, \\
  \K[\zeta_{6n}], & \text{if}\;d_\K = -3.
\end{cases}
\]
If $h_\K$ is coprime to $n$ and $n\geq 2$, $d_\K\neq -3,-4$, then
\[
 \widehat{\K}_n = \K[\zeta_{2n}]
\]
holds.  \\[0.5ex]
d) If $\Ccal\ell_\K = \Ccal\ell_\K[2]$ and $n\geq 2$, $d_\K\neq -3,-4$, we have
\[
 \widehat{\K}_n = 
 \begin{cases}
  \K[\zeta_{2n}], & \text{if}\;n\;\text{is odd},  \\
  \K[\zeta_{2n}, \sqrt{p},\;p\;\text{prime},\;p|d_{\K}], & \text{if}\;n\;\text{is even}.
 \end{cases}
\]
The examples show that $\widehat{\K}_n$ is an interesting number field. We want to generalize the fact that only ramified primes appear in the definition of $\widehat{\K}_2$ to the case of arbitrary $n$.
\end{exmps}

\begin{theorem}\label{Theorem4}
Let $\K$ be an imaginary-quadratic number field. Then the field extension 
\[
\widehat{\K}_n = \widehat{\K}( \Delta_{n,\mathbb{K}}^\ast ) \supseteq \Q 
\]
is ramified exactly at the primes dividing~$n d_\K$.
\end{theorem}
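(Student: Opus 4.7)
The plan is to exhibit explicit generators for $\widehat{\K}_n$ and then control their ramification locally.

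First, I would use Theorem~\ref{Theorem1_neu}(b) together with~\eqref{gl_3neu_neu} to fix, for each class $[\Acal]\in\Ccal\ell_\K[n]$, an integral representative $\Acal\subseteq\oh_\K$ and a matrix $A_\Acal\in\Acal^{n\times n}$ with $\oh_\K\det A_\Acal=\Acal^n$; put $\alpha_\Acal:=\det A_\Acal\in\oh_\K$ and $u_\Acal:=\sqrt[n]{\alpha_\Acal}\in\C$. The associated coset representative from the proof of Theorem~\ref{Theorem1_neu}(b) has all entries in $\oh_{\K(u_\Acal)}$ by Remark~\ref{Remark1}(b). The remaining elements of $\Delta_{n,\K}^*$ differ from these by the kernel of $\Delta_{n,\K}^*\to\Ccal\ell_\K[n]$, whose only new contribution is a finite group of roots of unity---lying in $\mu_{2n}$ when $d_\K\neq-3,-4$, and in $\mu_{4n}$ respectively $\mu_{6n}$ for $d_\K=-4,-3$, according to $\oh_\K^*$. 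Consequently
\[
 \widehat{\K}_n\;\subseteq\;\K\bigl(\zeta_N,\;u_\Acal\,;\,[\Acal]\in\Ccal\ell_\K[n]\bigr)
\]
for $N\in\{2n,4n,6n\}$, and in each of these three cases every rational prime dividing $N$ already divides $nd_\K$.

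Second, I would check the ramification of each generator separately. The cyclotomic part $\K(\zeta_N)/\Q$ is unramified outside primes dividing $Nd_\K$, hence outside $nd_\K$. For the Kummer generator $u_\Acal$ it suffices to show that at every prime $\wp$ of $\K$ above a rational prime $p\nmid nd_\K$ the local extension $\K_\wp(u_\Acal)/\K_\wp$ is unramified. Here the decisive observation is that $\oh_\K\alpha_\Acal=\Acal^n$ forces $v_\wp(\alpha_\Acal)=n\,v_\wp(\Acal)$: locally $\alpha_\Acal=\pi^{nv_\wp(\Acal)}\eta$ with a uniformizer $\pi$ and a unit $\eta\in\oh_{\K_\wp}^*$, whence $u_\Acal=\pi^{v_\wp(\Acal)}\sqrt[n]{\eta}$. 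Since $\wp\nmid n$, the polynomial $T^n-\eta$ has unit discriminant, reduces modulo $\wp$ to a separable polynomial, and $\K_\wp(\sqrt[n]{\eta})/\K_\wp$ is therefore unramified.

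Third, I combine the two pieces. Any rational prime $p\nmid nd_\K$ is unramified in $\K$ because $p\nmid d_\K$, and every prime $\wp$ of $\K$ above $p$ satisfies $\wp\nmid n$. By Step~2 both $\K(\zeta_N)/\K$ and every $\K(u_\Acal)/\K$ are unramified at $\wp$, hence so is their compositum; as this compositum contains $\widehat{\K}_n$, the prime $p$ is unramified in $\widehat{\K}_n$, which proves the theorem.

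The main obstacle I anticipate is the exclusion of ramification at primes $\wp$ dividing $\Acal$ in Step~2: a priori such $\wp$ could ramify in the Kummer extension $\K(u_\Acal)/\K$. What makes the argument go through is precisely that the exponent $v_\wp(\alpha_\Acal)$ is a multiple of $n$, reducing the local Kummer extension to the unramified adjunction of an $n$-th root of a unit in residue characteristic coprime to~$n$. The case analysis for the exceptional discriminants $d_\K\in\{-3,-4\}$ and the identification of the precise $N$ is then routine via Theorem~\ref{Theorem1_neu}.
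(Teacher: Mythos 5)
Your proof is correct, but it takes a genuinely different route from the paper's. Both arguments reduce, via Theorem \ref{Theorem1_neu}\,b) and \eqref{gl_3neu_neu}, to controlling the field generated by the entries of the diagonal representatives $\frac{1}{u}\left(\begin{smallmatrix}A&0\\0&D\end{smallmatrix}\right)$, i.e.\ essentially to the ramification of $\K(\sqrt[n]{\det A})$. The paper then invokes class field theory: it passes to the subfield $\mathbb{L}[n]$ of the Hilbert class field of $\K$ cut out by $\Ccal\ell_\K[n]$, where $\Acal$ becomes principal, $\Acal\oh_{\mathbb{L}[n]}=u'\oh_{\mathbb{L}[n]}$, so that $v=\det A/u'^{\,n}$ is a \emph{global} unit and the entries lie in $\mathbb{L}[n](\sqrt[n]{v})$; unramifiedness outside $nd_\K$ follows since $\mathbb{L}[n]\supseteq\K$ is everywhere unramified and $X^n-v$ is separable modulo any prime not dividing $n$. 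You avoid the Hilbert class field entirely and argue purely locally: since $\oh_\K\det A=\Acal^n$ forces $v_\wp(\det A)\equiv 0\pmod{n}$ at every prime $\wp$, the local extension $\K_\wp(\sqrt[n]{\det A})$ is generated by an $n$-th root of a local unit, hence unramified whenever $\wp\nmid n$. Your version is more elementary; the paper's version buys the structural insight that $\widehat{\K}_n$ sits inside a unit-Kummer extension of a subfield of the Hilbert class field, which is precisely what explains the powers of $d_\K$ observed in the discriminant table at the end of the paper. One cosmetic slip in your write-up: for $d_\K=-3$ and $n$ odd, the prime $2$ divides $N=6n$ but not $nd_\K=-3n$, so your blanket claim that every prime dividing $N$ divides $nd_\K$ fails there; the conclusion is unaffected because $\Q(\zeta_{6n})=\Q(\zeta_{3n})$ for odd $n$, so the cyclotomic factor is still unramified at $2$.
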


\begin{proof}
Observe that~$\K$ and~$\mathbb{Q}(\mu)$ are ramified at~$d_\K$ and~$n$. Since~$\widehat{K}_n$ contains both~$\K$ and~$\mathbb{Q}(\mu)$, we conclude that~$\widehat{K}_n$ is ramified at~$n d_\K$.

In the remainder of the proof, we show that~$\widehat{\K}_n$ is unramified outside of~$n d_{\K}$. Recall that the composite of two field extension~$F \supseteq K$ and~$F' \supseteq K$ is unramified at a prime ideal~$\mathfrak{p} \subseteq \oh_K$ if and only if both~$F$ and~$F'$ are so. 

Let $n\geq 2$, as $\widehat{\K}_1 = \Q$ is trivial. Moreover $d_{\K}=-3$ and $d_{\K}= -4$ are clear due to the Example. Therefore let $d_{\K} \neq -3,-4$. 
We have $\mathbb{K} \subseteq \mathbb{K}( \Delta_{n,\mathbb{K}}^\ast )$ and the matrix entries of elements of~$\Gamma_n(\oh_{\mathbb{K}})$ lie in~$\mathbb{K}$. Observe that~$\mathbb{K}$ is unramified outside of~$d_{\mathbb{K}}$. The cyclotomic field~$\mathbb{Q}(\mu)$ is unramified outside of~$n d_\K$. Therefore, we can focus on the ramification of fields generated by representatives of~$\Delta_{n,\mathbb{K}}^\ast \slash \mu \Gamma_n(\oh_{\mathbb{K}})$.

Since $\Delta_{n,\mathbb{K}}^\ast \slash \mu\Gamma_n(\oh_{\mathbb{K}}) \cong \Ccal\ell_\K[n]$ is finite, we conclude that it suffices to show the analogue of Theorem~\ref{Theorem4} for individual elements~$M$ of~$\Delta_{n,\mathbb{K}}^\ast$. This allows us to employ Theorem~1~b) and further restrict to fields generated by the entries of the matrices~$\frac{1}{u} \left(\begin{smallmatrix} A & 0 \\ 0 & D \end{smallmatrix}\right) \in \Delta_{n,\mathbb{K}}^\ast$ in \eqref{gl_3neu_neu}.

Now, recall from, for example, Section~3.1 of~\cite{Co} that the Hilbert class field~$\mathbb{L}$ of~$\mathbb{K}$ is defined as the maximal abelian unramified extension of~$\mathbb{K}$ and we may assume $\mathbb{L}\subseteq \C$ by fixing an embedding. $\mathbb{L}$ has the property that every ideal~$\mathcal{A} \subseteq \oh_{\mathbb{K}}$ yields a principal ideal~$\mathcal{A} \oh_{\mathbb{L}} \subseteq \oh_{\mathbb{L}}$. More precisely, $\mathbb{L} \supseteq \mathbb{K}$ is a Galois extension with Galois group canonically isomorphic to~$\Ccal\ell_{\mathbb{K}}$. The~$n$-torsion subgroup of~$\Ccal\ell_{\mathbb{K}}$ via the Galois correspondence gives rise to an intermediate extension 
\[
\mathbb{L} \supseteq \mathbb{L}[n] \supseteq \mathbb{K}
\]
with the property that an ideal~$\mathcal{A} \subseteq \oh_{\mathbb{K}}$ yields a principal ideal~$\mathcal{A} \oh_{\mathbb{L}[n]} \subseteq \oh_{\mathbb{L}[n]}$ if and only $[\mathcal{A}] \in \Ccal\ell_{\mathbb{K}}[n]$. This can be inferred from, for instance, the unique factorization property of ideals in~$\oh_{\mathbb{K}}$ and the splitting behavior over~$\mathbb{L}$ of prime ideals in~$\oh_{\mathbb{K}}$, which is outlined in~\cite{Co}. 

Choose an ideal~$\mathcal{A} \subseteq \oh_{\mathbb{K}}$ satisfying~$[ \mathcal{A} ] \in \Ccal\ell_{\mathbb{K}}[n]$ and let $A \in \oh_{\mathbb{K}}^{n \times n}$ be a matrix with elementary divisors~$\mathcal{A},\ldots,\mathcal{A}$ as in the proof of Theorem~1. Then there is an element $u' \in \oh_{\mathbb{L}[n]}$ such that $\mathcal{A} \oh_{\mathbb{L}[n]} = u' \oh_{\mathbb{L}[n]}$. Set $v = \det\,A \slash u^{\prime\,n} \in \oh^*_{\mathbb{L}[n]}$. Then we have~$u^n = \det\,A$ with~$u = u' \sqrt[n]{v}$, and therefore~$\frac{1}{u} \left(\begin{smallmatrix} A & 0 \\ 0 & D \end{smallmatrix}\right) \in \Delta_{n,\mathbb{K}}^\ast$ for~$D$, again, as in the proof of Theorem~1. In particular, we have found a preimage of~$[\mathcal{A}] \in \Ccal\ell_{\mathbb{K}}[n]$ under the homomorphism in Theorem~1~b), whose entries are contained in~$\mathbb{L}[n](\sqrt[n]{v})$.

To finish the proof, we have to show that~$\mathbb{L}[n](\sqrt[n]{v}) \supseteq \Q$ is unramified outside of~$n d_{\mathbb{K}}$. As~$\mathbb{L} \supseteq \mathbb{K}$ is unramified, so is~$\mathbb{L}[n] \supseteq \mathbb{K}$. If $\mathfrak{p} \subseteq \oh_{\mathbb{K}}$ is a prime ideal that does not divide~$n$, then the polynomial~$X^n - v$ is separable modulo~$\mathfrak{p}$, since its derivative~$n X^{n-1}$ does not vanish modulo~$\mathfrak{p}$ and neither does~$v$.
\end{proof}

We turn to the computation of representatives of~$\Delta_{n,\mathbb{K}}^\ast \slash \mu \Gamma_n(\oh_{\mathbb{K}})$, which we have implemented based on the computer algebra package Hecke~\cite{FiHaHoJo}. By virtue of the proof of both Theorem~1 and~\ref{Theorem4}, it is naturally reduced to finding matrices~$\frac{1}{u} A$ of determinant~$1$ with~$A \in \oh_{\mathbb{K}}^{n \times n}$ and elementary divisors~$\mathcal{A},\ldots,\mathcal{A}$ for~$[\mathcal{A}] \in \Ccal\ell_{\mathbb{K}}[n]$. The computation of such~$A$ is achieved by the function \texttt{hermitian extension} in \cite{Rm3}. 

The two key aspects of \texttt{hermitian extension} are the use of the Hilbert class field in \cite{Rm3} and the function \texttt{elementary divisor matrix}. The latter produces an~$n \times n$ matrix~$A$ over~$\oh_{\mathbb{K}}$ with elementary divisors~$\mathcal{A},\ldots,\mathcal{A}$. To this end, we employ pseudo-matrices from Definition~1.4.5 of~\cite{Co}. Specifically, the pseudo-matrix~$\widetilde{A} = (I, (\mathcal{A},\ldots,\mathcal{A}))$ trivially has the desired elementary divisors. The Steinitz form of~$\widetilde{A}$ is a pseudo-matrix~$\widetilde{A}'$, whose associated ideals are trivial, save the last one, which equals the Steinitz class of the ideals associated with~$\widetilde{A}$ (cf.~Theorem~1.2.12 of~\cite{Co}). Since~$[\mathcal{A}] \in \Ccal\ell_{\mathbb{K}}[n]$ by assumption, this Steinitz class is principal and yields the desired matrix~$A$.

We conclude this paper with a brief excerpt of discriminants of the fields generated by the entries of~$\Delta_{n,\mathbb{K}}^\ast$, which we have obtained through our script. Observe that the powers of the discriminant of~$\mathbb{K}$ originate in the Hilbert class field.

\begin{table}[h]
\caption{Discriminants~$d_{\widehat{\K}_n}$ of the fields generated by the entries of~$\Delta_{n,\mathbb{K}}^\ast$ given $n$ and $\mathbb{K} = \mathbb{Q}(\sqrt{-m})$}
\begin{center}
\footnotesize
\begin{tabular}{rccccl@{\hspace{.3em}}ccccc}
\toprule
 $m$  & $-23$                   & $-31$         & $-59$         & $-83$ && $-39$ & $-55$ & $-56$ & $-68$ & $-84$  \\
\cmidrule{2-5} \cmidrule{7-11}
$n$                                  & \multicolumn{4}{c}{3} &&  \multicolumn{5}{c}{4}\\
\midrule
$d_{\widehat{\K}_n}$                   & $3^{14}\,23^3$           & $3^{14}\,31^3$ & $3^{14}\,59^3$ & $3^{14}\,83^3$ && 
 $2^{72}\,3^{16}\,13^{16}$ & $2^{72}\,5^{16}\,11^{16}$ &       $2^{44}\,7^8$ & $2^{48}\,17^8$ & $2^{32}\,3^8\,7^8$  \\
\bottomrule
\end{tabular}
\normalsize
\end{center}
\end{table}


\bigskip
\setlength{\parindent}{0pt}
{\small \textbf{Acknowledgement.} The authors thank Joana Rodriguez for the direct proof of Theorem 2 and Claus Fieker for his help with Hecke~\cite{FiHaHoJo}.} 



\bibliographystyle{plain}
\renewcommand{\refname}{Bibliography}
\bibliography{bibliography_krieg_2021} 

\end{document}